\documentclass[leqno,11pt,a4paper]{amsart}
\usepackage{bw}
\begin{document}
\vs
\title[Looijenga towers and asymptotics of ECH]{Towers of Looijenga pairs and asymptotics of ECH capacities}
\maketitle

\begin{abstract}
ECH capacities are rich obstructions to symplectic embeddings in $4$-dimensions that have also been seen to arise in the context of algebraic positivity for (possibly singular) projective surfaces. We extend this connection to relate general convex toric domains on the symplectic side with towers of polarised toric surfaces on the algebraic side, and then use this perspective to show that the sub-leading asymptotics of ECH capacities for all convex and concave toric domains are $O(1)$. We obtain sufficient criteria for when the sub-leading asymptotics converge in this context, generalising results of Hutchings and of the author, and derive new obstructions to embeddings between toric domains of the same volume. We also propose two invariants to more precisely describe when convergence occurs in the toric case. Our methods are largely non-toric in nature, and apply more widely to towers of polarised Looijenga pairs.
\end{abstract}

\section{Introduction}

We outline the symplectic part of the story -- in particular, the applications to sub-leading asymptotics of ECH capacities -- in \S\ref{sec:symp_persp} and describe the novel aspects of our algebro-geometric methods and constructions in \S\ref{sec:alg_persp}.

\subsection{Symplectic perspective} \label{sec:symp_persp}

A great deal of symplectic geometry has been stimulated by symplectic embedding problems. These are problems of the form: given two symplectic manifolds $(X,\omega)$ and $(X,\omega')$ of the same dimension, when does there exist a smooth embedding $\iota\colon X\to X'$ such that $\iota^*\omega'=\omega$? Such embeddings are called \emph{symplectic embeddings}. If there is a symplectic embedding $(X,\omega)\to(X',\omega')$ we write $(X,\omega)\se(X',\omega')$.

For each symplectic embedding problem there is a `constructive' aspect in which the aim is to show the existence of a symplectic embeddings. Conversely there is an `obstructive' aspect that usually involves finding invariants that are `monotone' under symplectic embeddings and hence obstruct their existence. We will focus on the latter here.

\emph{ECH capacities} were introduced by Hutching \cite{hut_qua_11} to obstruct embeddings between symplectic $4$-manifolds. To a symplectic $4$-manifold $(X,\omega)$ ECH associates a non-decreasing sequence
$$\{\cech_k(X,\omega)\}_{k\in\Z_{\geq0}}$$
of (extended) real numbers such that
$$(X,\omega)\se(X',\omega')\Longrightarrow\cech_k(X,\omega)\leq\cech_k(X',\omega')$$
for all $k\in\Z_{\geq0}$. ECH capacities have found many applications to notable embedding problems \cite{mcd_hof_11,ccfhr_sym_14,cri_sym_19} and have been shown to have significant connections with the geometry of divisors on algebraic surfaces \cite{wor_ech_19,cw_ech_20,wor_alg_20} and the lattice combinatorics of polytopes \cite{cls_new_15,wor_ech_19,ck_ehr_20}. For a general introduction to ECH, see \cite{hut_lec_14}. One of the main contributions of this paper is to extend and refine these connections in the context of a wider class of spaces.

The algebraic analogues -- \emph{algebraic capacities} -- to ECH capacities were introduced by the author in \cite{wor_ech_19,wor_num_20} and have since been applied to embedding problems into closed surfaces \cite{cw_ech_20} and to the asymptotics of ECH capacities as $k\to\infty$ \cite{wor_alg_20,chmp_inf_20}. To a pair $(Y,A)$ consisting of a projective algebraic surface $Y$ with an ample (or big and nef) $\R$-divisor $A$ -- a \emph{polarised surface} -- the associated algebraic capacities form a non-decreasing sequence
$$\{\calg_k(Y,A)\}_{k\in\Z_{\geq0}}$$
of real numbers with certain appealing properties; some of which we will describe shortly.

As it will occupy much of this paper, we outline the current asymptotic understanding of ECH capacities and the motivation for further study. We start with the `Weyl law' for ECH.

\begin{thm*}[{\cite[Thm.~1.1]{chr_asy_15}}] Suppose $(X,\omega)$ is a Liouville domain such that all $\cech_k(X,\omega)$ are finite. Then
$$\lim_{k\to\infty}\frac{\cech_k(X_\Omega)^2}{k}=4\on{vol}(X,\omega)$$
\end{thm*}

In other words $\cech_k(X,\omega)\sim\sqrt{4\on{vol}(X,\omega)k}$. This was used by Cristofaro-Gardiner--Hutchings \cite{ch_one_16} to prove a refinement of the Weinstein conjecture. One can hence define \emph{error terms}
$$e_k(X,\omega):=\cech_k(X,\omega)-\sqrt{4\on{vol}(X,\omega)}$$
These are manifestly $o(\sqrt{k})$ but we consider what more there is to say. A series of estimates due to Sun \cite{sun_est_18}, Cristofaro-Gardiner--Savale \cite{cs_sub_18}, and Hutchings \cite{hut_ech_19} found bounds of the form $O(k^p)$ for $p=125/252,p=2/5,p=1/4$ respectively in quite high generality. There are two conjectures significantly extending these estimates that motivate much of this paper.

\begin{conjecture*} \label{conj:intro} Let $(X,\omega)$ be a star-shaped domain in $\R^4$.
\begin{enumerate}
\item \textnormal{(c.f.~\cite[Ex.~1.6]{hut_ech_19})} $e_k(X,\omega)=O(1)$.
\item \textnormal{(\cite[Conj.~1.5]{hut_ech_19})} If $(X,\omega)$ is \textnormal{generic} then $e_k(X,\omega)$ converges with limit
$$\lim_{k\to\infty}e_k(X,\omega)=-\frac{1}{2}\on{Ru}(X,\omega)$$
where $\on{Ru}(X,\omega)$ is the \textnormal{Ruelle invariant} of $(X,\omega)$.
\end{enumerate}
\end{conjecture*}

The Ruelle invariant was defined for nice star-shaped domains in $\R^4$ by Hutchings \cite[Def.~1.4]{hut_ech_19} following ideas of Ruelle \cite{rue_rot_85}, and one can extend this definition to more general star-shaped domains by continuity. Some of the results of this paper can be viewed as seeking to make precise what `generic' means.

We consider a class of toric symplectic $4$-manifolds that will play a central role for us. Suppose $\Omega\subseteq\R^2_{\geq0}$ is a simply-connected region. Define the \emph{toric domain}
$$X_\Omega:=\mu^{-1}(\Omega)$$
where $\mu\colon\C^2\to\R^2$ is the moment map for the $S^1\times S^1$-action on $\C^2$. If $\partial\Omega$ intersects the coordinate axes in sets of the form $\{(x,0):x\in[0,a]\}$ and $\{(0,y):y\in[0,b]\}$ and the remaining part of the boundary -- which we denote by $\partial^+\Omega$ -- is a convex curve (that is, the region $\Omega$ is convex), we say that $\Omega$ is a \emph{convex domain} and that $X_\Omega$ is a \emph{convex toric domain}. This class includes balls, ellipsoids, polydisks, and many other classic symplectic manifolds. We denote the quantities $a$ and $b$ appearing above by $a(\Omega)$ and $b(\Omega)$.

In \cite[Thm.~4.10]{wor_alg_20} it was shown that when $\Omega=q\Omega_0$ for some lattice polygon $\Omega_0$ and some $q\in\R_{>0}$, we have
$$e_k(X_\Omega)=O(1)$$
with an explicit calculation of the $\limsup$ and $\liminf$, which are always different and so $e_k(X_\Omega)$ does not converge in this situation. We say that such $\Omega$ are of \emph{scaled-lattice type}. The approach to proving this result uses the fact that the interior $X_\Omega^\circ$ can be realised as the complement of an ample divisor $A_\Omega$ in the (possibly singular) projective toric surface $Y_\Omega$ associated to $\Omega$ \cite[\S2.3]{cls_tor_11}, and that by \cite[Thm.~1.5]{wor_ech_19} the ECH capacities $\cech_k(_\Omega)$ are given by the algebraic capacities $\calg_k(Y_\Omega,A_\Omega)$ associated to the pair $(Y_\Omega,A_\Omega)$.

In \cite{hut_ech_19} Hutchings showed that Conj.~\ref{conj:intro}(ii) holds for $X_\Omega$ when $\Omega$ is `strictly convex'\footnote{Hutchings' result also shows that Conj.~\ref{conj:intro}(ii) is true when $\Omega$ is `strictly concave'.}: namely, has smooth boundary and all outward normals to $\partial^+\Omega$ live in the strictly positive quadrant of $\R^2$. The Ruelle invariant in this setting is equal to $a(\Omega)+b(\Omega)$.

These two cases will be subsumed in the following theorem. The \emph{affine length} $\laff(v)$ of a vector $v\in\R^2$ is the pseudonorm defined by $0$ if $qv\notin\Z^2$ for any $q\in\R_{>0}$ and by $1$ if $v$ is a primitive vector in $\Z^2$. We define the affine length of a continuous curve by the (possibly empty) sum of the affine lengths of the direction vectors defining each linear segment of the curve.

\begin{thm*}[Cor.~\ref{cor:main}] \label{thm:main_intro} Let $X_\Omega$ be a convex toric domain, then
\begin{align*}
-\frac{1}{2}\left(a(\Omega)+b(\Omega)-\frac{1}{2}\laff(\partial^+\Omega)\right)&\geq\limsup_{k\to\infty}e_k(X_\Omega) \\
&\geq\liminf_{k\to\infty}e_k(X_\Omega)\geq-\frac{1}{2}\left(a(\Omega)+b(\Omega)+\frac{1}{2}\laff(\partial^+\Omega)\right)
\end{align*}
where $\partial^+\Omega$ is the part of $\partial\Omega$ not on the coordinate axes. In particular, $e_k(X_\Omega)=O(1)$.
\end{thm*}

Notice that the upper and lower bounds can easily be translated to involve the Ruelle invariant, and that their midpoint is exactly $-\frac{1}{2}\on{Ru}(X_\Omega)$ when $X_\Omega$ is strictly convex. We immediately obtain the following corollary generalising \cite[Thm.~1.10]{hut_ech_19}.

\begin{cor*}[Cor.~\ref{cor:main}] \label{cor:intro_main} Let $X_\Omega$ be a convex toric domain. When $\partial^+\Omega$ has no rational-sloped edges we have that $e_k(X_\Omega)$ is convergent and
$$\lim_{k\to\infty}e_k(X_\Omega)=-\frac{1}{2}(a(\Omega)+b(\Omega))$$
\end{cor*}

If a region $\Delta\subseteq\R^2_{\geq0}$ is bounded above by the graph of a convex function $f\colon[0,a]\to\R_{\geq0}$ we say that $X_\Delta$ is a \emph{concave toric domain}. Via formal properties of ECH capacities we obtain an analogous result for concave toric domains.

\begin{thm*}[Thm.~\ref{cor:main_conc}] \label{thm:intro_conc} Let $X_\Delta$ be a concave toric domain. Then
\begin{align*}
-\frac{1}{2}\left(a(\Delta)+b(\Delta)-\laff(\partial^+\Delta)\right)&\geq\limsup_{k\to\infty}e_k(X_\Delta) \\
&\geq\liminf_{k\to\infty}e_k(X_\Delta)\geq-\frac{1}{2}\left(a(\Delta)+b(\Delta)+\laff(\partial^+\Delta)\right)
\end{align*}
and so $e_k(X_\Delta)=O(1)$. If $\partial^+\Delta$ has no rational-sloped edges then
$$\lim_{k\to\infty}e_k(X_\Delta)=-\frac{1}{2}(a(\Delta)+b(\Delta))$$
\end{thm*}

We note that one consequence of better understanding the asymptotics of $e_k(X,\omega)$ is to obtain finer embedding obstructions between symplectic $4$-manifolds of the same volume (far from vacuous, as discussed in \cite[Rmk.~1.14]{hut_ech_19}). As a corollary to Thm.~\ref{thm:main_intro} and Thm.~\ref{thm:intro_conc} we obtain the following embedding obstruction subsuming \cite[Cor.~1.13]{hut_ech_19} and \cite[Cor.~5.15]{wor_alg_20}. For the purposes of this result we say that a toric domain $X_\Omega$ is `admissible' if either $\Omega$ is concave or convex and $\partial^+\Omega$ has no rational-sloped edges, or if $\Omega$ is convex and of scaled-lattice type.

\begin{cor*} Let $X_\Omega$ and $X_{\Omega'}$ be admissible toric domains of the same symplectic volume. Then
$$X_\Omega^\circ\se X_{\Omega'}\Longrightarrow\laff(\partial\Omega)\geq\laff(\partial\Omega')$$
\end{cor*}

We conclude in \S\ref{sec:outlook} by discussing two invariants -- the number of rational-sloped edges in $\partial^+\Omega$ and the degree of independence over $\Q$ of their affine lengths -- that we believe might further govern the asymptotics of $e_k(X_\Omega)$ for convex and concave toric domains.

\subsection{Algebraic perspective} \label{sec:alg_persp}

Our approach to the results in \S\ref{sec:symp_persp} is to identify an algebraic object whose `algebraic capacities' agree with the ECH capacities of a convex toric domain $X_\Omega$. We will give the formal definition in \S\ref{sec:alg_cap} but, in short, one can think of algebraic capacities as positivity invariants of a polarised surface $(Y,A)$ obtained as solutions to quadratic optimisation problems on the nef cone of $Y$. We denote the $k$th algebraic capacity of $(Y,A)$ by $\calg_k(Y,A)$.

As discussed above, when $\Omega$ is a rational-sloped polygon one can recover the ECH capacities of $X_\Omega$ as the algebraic capacities of the polarised toric surface $(Y_\Omega,A_\Omega)$ corresponding to $\Omega$. When $\Omega$ is a non-polytopal convex domain, we will use the \emph{weight expansion} of $\Omega$ \cite{mcd_sym_09,ccfhr_sym_14} in \S\ref{sec:tibs} to define a tower of polarised toric surfaces
$$(Y_0,A_0)\overset{\pi_1}{\longleftarrow}(Y_1,A_1)\overset{\pi_2}{\longleftarrow}\dots\overset{\pi_n}{\longleftarrow}(Y_n,A_n)\overset{\pi_{n+1}}{\longleftarrow}\dots$$
for which there exists a notion of algebraic capacities extending the definition for polarised surfaces. We denote such towers of polarised surfaces by calligraphic letters $(\mY,\mA)$ and denote their algebraic capacities by $\calg_k(\mY,\mA)$.

\begin{prop*}[Prop.~\ref{prop:ech_alg}] Let $\Omega$ be a convex domain, and let $(\mY_\Omega,\mA_\Omega)$ denote the tower of polarised toric surfaces associated to $\Omega$. Then
$$\cech_k(X_\Omega)=\calg_k(\mY_\Omega,\mA_\Omega)$$
for all $k\in\Z_{\geq0}$.
\end{prop*}

\begin{remark*}
A point of independent interest here is that the tower of polarised toric surfaces we produce can be viewed naturally as the object in toric algebraic geometry corresponding to the convex non-polytopal region $\Omega$.
\end{remark*}

Just as in \cite{wor_alg_20} we find that our results on the asymptotics of algebraic capacities for towers of polarised toric surfaces do not require much of the toric structure and in fact apply to a much larger class of algebro-geometric objects.

We will say that a \emph{Looijenga pair} \cite{loo_rat_81,ghk_mir_15} is a pair $(Y,L)$ consisting of a $\Q$-factorial rational surface $Y$ with a singular nodal curve $L\in{|}{-}K_Y|$. Recall that $\Q$-factorial means that an integer multiple of each Weil divisor on $Y$ is Cartier. Note that elsewhere in the literature it is standard to assume that $Y$ is smooth, in which case $L$ is either an irreducible rational nodal curve or a cycle of smooth rational curves.

We consider Looijenga pairs with a polarisation supported on the anticanonical divisor, and towers of such objects in which the choices of polarisation and anticanonical divisor are respected appropriately. We call such objects \emph{polarised Looijenga towers} and write them as pairs $(\mY,\mA)$. The towers of polarised toric surfaces we consider are examples of these, though there are also many interesting non-toric examples. We develop the necessary birational geometry of polarised Looijenga towers in \S\ref{sec:div}, including a natural notion of divisor (which includes the polarisation $\mA$), an intersection pairing, and a notion of canonical divisor $K_\mY$.

\begin{thm*}[Thm.~\ref{thm:ind_weyl} + Thm.~\ref{thm:loo_main}]Let $(\mY,\mA)$ be a polarised Looijenga tower. Then $\calg_k(\mY,\mA)\sim\sqrt{2\mA^2k}$ and the error terms $\ealg_k(\mY,\mA):=\calg_k(\mY,\mA)-\sqrt{2\mA^2k}$ satisfy
\begin{align*}
\frac{1}{2}K_\mY\cdot\mA-K_\mY^+\cdot\mA&\geq\limsup_{k\to\infty}\ealg_k(\mY,\mA) \\
&\geq\liminf_{k\to\infty}\ealg_k(\mY,\mA)\geq\frac{1}{2}K_\mY\cdot\mA
\end{align*}
where $K_\mY^+$ is a divisor on $\mY$ canonically associated to $(\mY,\mA)$.
\end{thm*}

In the case that $(\mY,\mA)$ is a tower of polarised toric surfaces arising from a convex domain $\Omega$ we calculate
$$K_\mY\cdot\mA=-\laff(\partial\Omega)=-\left(a(\Omega)+b(\Omega)+\laff(\partial^+\Omega)\right)\text{ and }-K_\mY^+\cdot\mA=\laff(\partial^+\Omega)$$
establishing Thm.~\ref{thm:main_intro} and its consequences from \S\ref{sec:symp_persp}. We prove a convergence criterion similar to Cor.~\ref{cor:intro_main} in Prop.~\ref{prop:bal_conv}. Using intersection theory on $\mY$ we also formulate algebraic capacities intrinsically in terms of divisors on $\mY$ in Prop.~\ref{prop:intrinsic}. We hope that this `intrinsic' geometry of $\mY$ will shed more insight on the asymptotics of algebraic capacities and, hence, of ECH capacities.

\subsection*{Acknowledgements} I am grateful for many encouraging and helpful conversation with Dan Cristofaro-Gardiner, Michael Hutchings, Julian Chaidez, Vinicius Ramos, Tara Holm, and Ana Rita Pires. I am especially grateful to Michael Hutchings for discussing the content of \cite{hut_ech_19} with me, and to Vinicius Ramos for hosting me at IMPA where the idea for this project was seeded. I am very thankful to \DJ an-Daniel Erdmann-Pham for providing the proof of Lemma \ref{lem:analysis}.

\section{Towers of Looijenga pairs}

\subsection{Looijenga pairs and Looijenga towers} \label{sec:loo}

In our context we will define a \emph{Looijenga pair} to be a pair $(Y,L)$ consisting of:
\begin{itemize}
\item a $\Q$-factorial rational surface $Y$,
\item a singular nodal curve $L\in{|}{-}K_Y|$.
\end{itemize}
The basic example of a Looijenga pair is a toric surface equipped with the union of its torus-invariant divisors. Note that Looijenga pairs are usually assumed to be smooth elsewhere in the literature.

A \emph{polarised Looijenga pair} is a triple $(Y,L,A)$ consisting of a Looijenga pair and an ample divisor supported on a subset of $L$. This implies that the Looijenga pair is `positive' in the language of \cite{ghk_mir_15}. If $A$ is only big and nef we say that $(Y,L,A)$ is a \emph{pseudo-polarised Looijenga pair}.

A \emph{toric blowup} of a Looijenga pair $(Y,L)$ is a blowup $\pi\colon\wt{Y}\to Y$ with centre a node of $L$. Observe that in this case the divisor $\wt{L}+E$ -- the strict transform of $L$ plus the exceptional divisor -- is such that $(\wt{Y},\wt{L}+E)$ is a Looijenga pair. We will consider towers
$$\mY:(Y_0,L_0)\overset{\pi_1}{\longleftarrow} (Y_1,L_1)\overset{\pi_2}{\longleftarrow}\dots\overset{
\pi_{n-1}}{\longleftarrow}(Y_{n-1},L_{n-1})\overset{\pi_n}{\longleftarrow}(Y_n,L_n)\overset{\pi_{n+1}}{\longleftarrow}\dots$$
of Looijenga pairs where each map $\pi_n$ is a toric blowup. We call such structures \emph{Looijenga towers}.

We can also ask that each Looijenga pair is polarised and that the toric blowups are compatible with the polarisations. Namely, we want to consider towers
$$(Y_0,L_0,A_0)\overset{\pi_1}{\longleftarrow} (Y_1,L_1,A_1)\overset{\pi_2}{\longleftarrow}\dots\overset{
\pi_{n-1}}{\longleftarrow}(Y_{n-1},L_{n-1},A_{n-1})\overset{\pi_n}{\longleftarrow}(Y_n,L_n,A_n)\overset{\pi_{n+1}}{\longleftarrow}\dots$$
of polarised Looijenga pairs where each map $\pi_n$ is a toric blowup, and the polarisations are related by
$$A_n=\pi_n^*A_{n-1}-a_nE_n$$
for some $a_n>0$, where $E_n$ is the exceptional fibre of $\pi_n$. We call such a structure a \emph{polarised Looijenga tower} and denote it by a pair $(\mY,\mA)$ where $\mY$ is the underlying Looijenga tower and $\mA$ is the sequence of polarisations $(A_n)_{n\in\Z_{\geq0}}$. If the polarisations are relaxed to pseudo-polarisations, we call $(\mY,\mA)$ a \emph{pseudo-polarised Looijenga tower}. We say that $\mY$ is smooth if $Y_0$ is smooth.

\begin{lemma} Let $(\mY,\mA)$ be a pseudo-polarised Looijenga tower. Then $\lim_{n\to\infty}A_n^2$ exists.
\end{lemma}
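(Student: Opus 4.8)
The plan is to show that the sequence $(A_n^2)_{n\geq0}$ is non-increasing and bounded below by $0$, hence convergent. The key computational input is the relation $A_n=\pi_n^*A_{n-1}-a_nE_n$ defining a polarised Looijenga tower, together with the standard facts from intersection theory on a blowup: $\pi_n^*A_{n-1}\cdot E_n=0$ and $E_n^2=-1$ (the latter holds because $\pi_n$ is a blowup at a smooth point of the surface $Y_{n-1}$ — note a node of $L_{n-1}$ is a smooth point of $Y_{n-1}$ even when $Y_{n-1}$ is singular elsewhere, since $L_{n-1}$ is assumed to be a \emph{nodal} curve and the toric blowup is centred at one of its nodes). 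Combining these, one computes
\begin{equation*}
A_n^2=(\pi_n^*A_{n-1})^2-2a_n(\pi_n^*A_{n-1}\cdot E_n)+a_n^2E_n^2=A_{n-1}^2-a_n^2.
\end{equation*}
Since $a_n>0$ by hypothesis, this gives $A_n^2<A_{n-1}^2$, so the sequence is strictly decreasing; in fact $A_n^2=A_0^2-\sum_{j=1}^n a_j^2$.

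For the lower bound, I would use that each $A_n$ is big and nef (the tower is assumed pseudo-polarised), and a big and nef $\R$-divisor on a projective surface satisfies $A_n^2\geq0$ — this is the statement that the top self-intersection of a big and nef class is its volume, which is non-negative (and here positive, since $A_n$ is big). One subtlety is that $Y_n$ may only be $\Q$-factorial rather than smooth, so I would want to justify that the intersection pairing and the nef/big positivity statements carry over to this setting; this is standard — one passes to a resolution, or works with $\Q$-Cartier multiples — but it is the place where one must be slightly careful, and presumably the paper's \S\ref{sec:div} sets up exactly the intersection-theoretic framework needed to make this rigorous on the tower $\mY$.

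Granting the two bullet points — monotone decreasing and bounded below by zero — the monotone convergence theorem yields that $\lim_{n\to\infty}A_n^2$ exists, and moreover the limit equals $A_0^2-\sum_{j=1}^\infty a_j^2$, so as a byproduct $\sum a_j^2$ converges. The main obstacle I anticipate is not the convergence argument itself, which is essentially one line once the setup is in place, but rather confirming that the self-intersection numbers behave as in the smooth case on these possibly-singular $\Q$-factorial surfaces and across an infinite tower — i.e.\ that $A_n^2$ is even well-defined and that $E_n^2=-1$ and $A_n^2\geq0$ hold. Everything else is the elementary observation that subtracting positive quantities $a_n^2$ from a bounded-below quantity forces convergence.
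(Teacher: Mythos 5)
Your approach is the same as the paper's: show $A_n^2$ is non-increasing via the blowup intersection formula and bounded below by $0$ via pseudo-polarisation, then invoke monotone convergence. The paper's proof is essentially identical in structure, computing $A_n^2=(\pi_n^*A_{n-1}-a_nE_n)^2=A_{n-1}^2+a_n^2E_n^2\leq A_{n-1}^2$.

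One claim in your write-up is wrong, though it does not damage the argument. You assert $E_n^2=-1$ on the grounds that a node of $L_{n-1}$ is necessarily a smooth point of $Y_{n-1}$. This is false in the $\Q$-factorial setting the paper works in: in the basic toric example where $L$ is the torus-invariant boundary, its nodes are exactly the torus-fixed points, and these are precisely where a toric surface can (and often does) have quotient singularities — think of $\pr(1,1,2)$. A toric blowup (star subdivision) at such a point produces an exceptional curve with $E_n^2<0$, but not in general $E_n^2=-1$, and not in general an integer. This is presumably why the paper is careful to leave $E_n^2$ unevaluated and use only $a_n^2E_n^2\leq0$. Your proof would be correct if you replaced the exact value $E_n^2=-1$ by the general fact that an exceptional curve of a blowup at a point has strictly negative self-intersection; your own closing caveat already flags this as the place to be careful, so this is a small fix rather than a structural gap.
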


This also holds if we omit the anticanonical divisor $L_n$ and only consider a tower of pseudo-polarised surfaces related by arbitrary blowups.

\begin{proof} We see that
$$A_n^2=(\pi_n^*A_{n-1}-a_nE_n)^2=A_{n-1}^2+a_n^2E_n^2\leq A_{n-1}^2$$
and so $A_n^2$ is a decreasing sequence. It is bounded below since $A_n^2\geq0$ for all $n$ by the assumption that each $(Y_n,A_n)$ is pseudo-polarised and is hence convergent.
\end{proof}

As a result we define
$$\mA^2:=\lim_{n\to\infty}A_n^2=\inf\{A_n^2:n\in\Z_{\geq0}\}$$

\subsection{Polarised Looijenga towers from weighted posets} \label{sec:wt_poset}

We generalise the previous construction to use a poset other than $\Z_{\geq0}$ to index toric blowups. The towers that come from this construction can thus have many spires. We start by constructing the universal Looijenga tower $\mY_{(Y,L)}^\text{univ}$ associated to a Looijenga pair $(Y,L)$ and realise all pseudo-polarised Looijenga towers with $(Y_0,L_0)=(Y,L)$ in terms of it.

Let $(Y,L)$ be a Looijenga pair. We construct a poset $\mP_{(Y,L)}$ as follows. Let $\mP_0$ be the poset consisting of all nodes of $D$ with no order relations. Let $\pi_p\colon Y_p\to Y$ denote the toric blowup at a node $p\in L$ with exceptional divisor $E_p$. Set $\mP_p=\{p_1,p_2\}\cup\mP_0$ where $p_1,p_2$ are the two intersection points of $E_p$ with the strict transform of $L$. Define
$$\mP_1=\bigcup_{p\in\mP_0}\mP_p$$
and view this as poset by setting $q\leq p$ if and only if $q\in E_p$. Note that the elements of $\mP_1\setminus\mP_0$ correspond to nodes on the Looijenga pair $(Y_1,L_1)$ obtained from $(Y,L)$ by blowing up all the nodes of $D$. Repeating this process by blowing up each node on $L_1$ produces a new poset $\mP_2$ such that $\mP_2\setminus\mP_1$ is the set of nodes of the Looijenga pair $(Y_2,L_2)$ obtained by blowing up all nodes of $(Y_1,L_1)$. Continuing this procedure defines a Looijenga pair $(Y_n,A_n)$ for each $n\in\Z_{\geq0}$ -- letting $(Y,L)=(Y_0,L_0)$ -- and a poset $\mP_n$ such that $\mP_n\setminus\mP_{n-1}$ is the set of nodes of $(Y_n,L_n)$. These pairs coalesce to form a slightly more general kind of tower where blowups with multiple centres are permitted at each stage. For the remainder of this section we will use the term `Looijenga tower' to include such towers.

We call the Looijenga tower arising from this construction the \emph{universal Looijenga tower associated to $(Y,L)$} and denote it by $\mY_{(Y,L)}^\text{univ}$. We will later compare this to a construction of Hutchings \cite[\S3]{hut_ech_19}. Define the poset
$$\mP_{(Y,L)}=\bigcup_{n\geq0}\mP_n$$
Observe that $\mP_{(Y,L)}$ is a graded poset with grading defined by the filtration $\mP_n$. For $q\in\mP_{(Y,L)}$ of degree $n$ we obtain a Looijenga pair $(Y_q,L_q)$ obtained as the toric blowup of $(Y_n,L_n)$ at $q$.

\begin{definition}
Let $\mP$ be a countable poset. We call a function $\on{wt}\colon\mP\to\R_{\geq0}$ a \emph{weight function} if
\begin{itemize}
\item $\on{wt}$ is a poset homomorphism where $\R_{\geq0}$ is regarded as a poset in the usual way,
\item $\sum_{p\in\mP}\on{wt}(p)<\infty$.
\end{itemize}
A pair $(\mP,\on{wt})$ of a poset with a weight function is called a \emph{weighted poset}. We define the \emph{weight sequence} $\on{wt}(\mP)$ associated to a weighted poset $(\mP,\on{wt})$ to be the multiset $\{\on{wt}(p):p\in\mP\}$.
\end{definition}

We also write $\on{wt}(\mY,\mA):=\on{wt}(\mP_{(Y_0,L_0)})$ and refer to this as the weight sequence of $(\mY,\mA)$. We say that an element $q$ of a poset $\mP$ is a \emph{direct descendant} of $p\in\mP$ if $p>q$ and there is no $r\in\mP$ such that $p>r>q$.

From the data of a pseudo-polarised Looijenga pair $(Y,L,A)$ and a weight function satisfying some conditions on $\mP_{(Y,L)}$ we can produce a pseudo-polarised Looijenga tower $(\mY,\mA)$. Let $\on{wt}$ be a weight function on $\mP_{(Y,L)}$. To define a polarisation on each $(Y_n,L_n)$ we start by setting
$$A_1=\pi_1^*A_0-\sum_{p\in\mP_0}\on{wt}(p)E_p$$
and then recurse by setting
$$A_n=\pi_n^*A_{n-1}-\sum_{p\in\mP_n\setminus\mP_{n-1}}\on{wt}(p)E_p$$
If this recipe defines a polarisation (resp.~pseudo-polarisation) on each $(Y_n,L_n)$ then we say that $\on{wt}$ is an \emph{ample (resp.~big and nef)} weight function on $\mP_{(Y,L)}$. Thus, after choosing a big and nef weight function, to each $q\in\mP_{(Y,L)}$ there is a pseudo-polarised Looijenga pair $(Y_q,L_q,A_q)$. It follows by direct computation that $\on{wt}$ is big and nef implies $\on{wt}$ is a poset homomorphism.

We can non-canonically create a pseudo-polarised Looijenga tower in which each map $\pi_n$ is a single toric blowup as in \S\ref{sec:loo} from this data. We choose a bijective poset homomorphism $h\colon\mP_{(Y,L)}\to\Z_{\geq0}^\text{op}$, where $\Z_{\geq0}^\text{op}$ is $\Z$ with reverse ordering, and define $(Y_n,L_n,A_n)$ to be the pseudo-polarised Looijenga pair obtained by blowing up in the nodes $h^{-1}\{0,\dots,n\}$. One can easily verify that this is well-defined by the requirement that $h$ is a poset homomorphism.

In later sections we will choose $h$ such that $\on{wt}(h^{-1}(n))\geq\on{wt}(h^{-1}(n+1))$; in other words, there is a commutative diagram in the category of posets of the form:
$$\xymatrix{\mP_{(Y,L)} \ar[rr]^-{h} \ar[rd]_-{\on{wt}} & & \Z_{\geq0}^\text{op} \ar@{-->}[ld] \\
& \R_{\geq0}}$$
Of course $h$ is not a poset isomorphism in general!

We view two pseudo-polarised Looijenga pairs $(Y,L,A)$ and $(Y',L',A')$ as `equivalent' if there is a Looijenga pair $(Y'',L'')$ and two maps $\pi\colon Y''\to Y$ and $\pi'\colon Y''\to Y'$ given as compositions of toric blowups such that
$$\pi^*A=(\pi')^*A'$$
In this way, one can indeed recover any pseudo-polarised Looijenga tower $(\mY,\mA)$ with $(Y_0,L_0)=(Y,L)$ up to equivalence from $\mY_{(Y,L)}^\text{univ}$ by assigning a weight of zero to all nodes on toric blowups of $(Y,L)$ that are not blown up in $\mY$. We will revisit this notion of equivalence in \S\ref{sec:alg_cap}.

\subsection{Divisors on Looijenga towers} \label{sec:div}

Throughout this subsection we fix a Looijenga tower $\mY=\{(Y_n,L_n)\}$ with toric blowup maps $\pi_n$ and exceptional divisors $E_n$. We will introduce the notion of divisors on $\mY$, and study classes of divisors that will be relevant to our applications. Let $\mathbb{K}\in\{\Z,\Q,\R\}$.

\begin{definition} A \emph{$\mathbb{K}$-divisor} on $\mY$ is a sequence $\mD=\{D_n\}$ where $D_n$ is a $\mathbb{K}$-divisor on $Y_n$ such that
$$D_n=\pi_n^*D_{n-1}-d_nE_n$$
for some $d_n\in\mathbb{K}$.
\end{definition}

Clearly one can view a polarisation $\mA$ on $\mY$ as an $\R$-divisor on $\mY$. We call the sequence $(d_n)_{n\in\Z_{\geq1}}$ the \emph{weight sequence} of $\mD$. The weight sequence of $\mA$ regarded as a divisor is by construction the weight sequence of $(\mY,\mA)$ as defined in \S\ref{sec:wt_poset}.

$\mY$ has a \emph{canonical divisor} $K_\mY$ defined as the sequence
$$K_\mY=\{K_{Y_n}\}_{n\in\Z_{\geq0}}$$
When $\mY$ is smooth the weight sequence of $K_\mY$ is $(1,1,\dots)$. We denote the set of $\mathbb{K}$-divisors on $\mY$ by $\on{Div}(\mY)_\mathbb{K}$. One can easily modify this definition to produce numerical or linear equivalence classes of divisors on $\mY$. We define $\on{Div}^+(\mY)_\mathbb{K}$ to be the set of $\mathbb{K}$-divisors on $\mY$ whose weight sequences are summable, and $\on{Div}^b(\mY)_\mathbb{K}$ to be the set of $\mathbb{K}$-divisors on $\mY$ whose weight sequences are bounded.

There is evidently a pairing
$$\on{Div}^b(\mY)_\mathbb{K}\otimes\on{Div}^+(\mY)_\mathbb{K}\to\R,\;\;\;\;\;\mD\cdot\mD'=D_0\cdot D_0'+\sum_{n\geq1}d_nd_n'E_n^2$$
where $(d_n)_{n\in\Z_{\geq1}}$ and $(d_n')_{n\in\Z_{\geq1}}$ are the weight sequences of $\mD$ and $\mD'$ respectively. We choose the codomain to be $\R$ to avoid issues of integrality when $\mY$ is not smooth. This pairing extends the intersection product for each $Y_n$ in the sense that we can view $\on{Div}(Y_n)_\mathbb{K}$ as the subspace of $\on{Div}^+(\mY)_\mathbb{K}\subseteq\on{Div}^b(\mY)_\mathbb{K}$ consisting of all $\mathbb{K}$-divisors on $\mY$ whose weight sequences vanish after the $n$th term. When $\mA$ is a polarisation on a smooth Looijenga tower $\mY$ we will make much use of the quantity
$$-K_\mY\cdot\mA=-K_{Y_0}\cdot A_0-\sum_{a\in\on{wt}(\mY,\mA)}a$$

\subsection{Toric Looijenga towers} \label{sec:tibs}

We will study a class of polarised Looijenga towers arising from weighted posets that come from convex domains in $\R^2_{\geq0}$. The Looijenga pairs constituting these towers are toric surfaces. Key to our construction to is the \emph{weight sequence} $\on{wt}(\Omega)$ associated to a convex domain $\Omega$ following \cite{mcd_sym_09,ccfhr_sym_14}. It is well-known (e.g.~the work of Gross--Hacking--Keel \cite{ghk_mir_15,ghk_mod_15}) that the geometry of Looijenga pairs is close to the geometry of toric varieties and so this is a rich example to consider algebraically, as well as being the main source of applications to symplectic geometry.

We start by recalling the weight sequence associated to a concave or convex domain in $\R^2$. Let $\Delta_a$ denote the triangle in $\R^2$ with vertices $(0,0),(a,0),(0,a)$.

\begin{definition} \label{def:conc_wt} Let $\Omega$ be a concave domain. The \emph{weight sequence} $\on{wt}(\Omega)$ of $\Omega$ is defined recursively as follows.
\begin{itemize}
\item Set $\on{wt}(\emptyset)=\emptyset$ and $\on{wt}(\Delta_a)=(a)$.
\item Otherwise let $a$ be the largest real number such that $\Delta_a\subseteq\Omega$. This divides $\Omega$ into three (possibly empty) pieces: $\Delta_a,\Omega_2',\Omega_3'$.
\item If not empty, $\Omega_2'$ and $\Omega_3'$ are affine-equivalent to concave domains $\Omega_2$ and $\Omega_3$. Define
$$\on{wt}(\Omega)=(a)\cup\on{wt}(\Omega_2)\cup\on{wt}(\Omega_3)$$
regarded as a multiset.
\end{itemize}
\end{definition}

Note that $\on{wt}(\Omega)$ is finite if and only if $\Omega$ is a real multiple of a lattice concave domain but will be infinite in general. We define an analogous sequence for convex domains.

\begin{definition} \label{def:conv_wt} Let $\Omega$ be a convex domain. The \emph{weight sequence} $\on{wt}(\Omega)$ of $\Omega$ is defined recursively as follows.
\begin{itemize}
\item Let $c$ be the smallest real number such that $\Omega\subseteq\Delta_c$.
\item This divides $\Delta_c$ into three (possibly empty) pieces: $\Omega,\Omega_2',\Omega_3'$.
\item If non-empty, $\Omega_2'$ and $\Omega_3'$ are affine-equivalent to concave domains $\Omega_2$ and $\Omega_3$. Define
$$\on{wt}(\Omega)=(c)\cup\on{wt}(\Omega_2)\cup\on{wt}(\Omega_3)$$
using Def.~\ref{def:conc_wt}. We regard this as a multiset with a distinguished element $a$ from the recursion above that we call the \emph{head} of $\on{wt}(\Omega)$. We set $\on{wt}^-(\Omega):=\on{wt}(\Omega)\setminus\{c\}$.
\end{itemize}
\end{definition}

We depict the decompositions used to recursively define the weight sequence in Fig.~\ref{fig:wt_seq}, with the concave case shown in Fig.~\ref{fig:wt_seq}(a) and the convex case in Fig.~\ref{fig:wt_seq}(b). In both cases we denote the parts of $\partial\Delta_a$ away from $\Omega$ by dashed lines.

\begin{figure}[h]
\caption{Weight sequence decompositions}
\label{fig:wt_seq}
\begin{center}
\begin{tikzpicture}[scale=0.6]
\foreach \i in {0,...,4}
{
\foreach \j in {0,...,5}
{\node (\i\j) at (\i,\j){\tiny $\bullet$};
\node (\i\j) at (8+\i,\j){\tiny $\bullet$};
}
\node (\i) at (13,\i){\tiny $\bullet$};
}

\node (5) at (13,5){\tiny $\bullet$};

\draw (0,0) to (4,0);
\draw (0,0) to (0,5) to (1,2) to (2,1) to (4,0);
\draw[dashed] (0,3) to (3,0);

\node (l2) at (5,1){\small $\Omega_3'$};
\node (l1) at (5,4.7){\small $\Omega_2'$};

\draw[->] (l2) to (3.5,0.5);
\draw[->] (l1) to (0.7,3.5);

\draw (8,0) to (12,0);
\draw (8,0) to (8,4) to (10,3) to (12,1) to (12,0);
\draw[dashed] (8,5) to (13,0);
\draw[dashed] (8,4) to (8,5);
\draw[dashed] (12,0) to (13,0);

\node (l2) at (14,1){\small $\Omega_3'$};
\node (l1) at (14,4.2){\small $\Omega_2'$};

\draw[->] (l2) to (12.5,0.7);
\draw[->] (l1) to (8.7,4.5);

\node (la) at (2,-0.8){(a)};
\node (lb) at (10.5,-0.8){(b)};
\end{tikzpicture}
\end{center}
\end{figure}

\begin{definition} Let $(\mY,\mA)$ be a pseudo-polarised Looijenga tower. We say $(\mY,\mA)$ is \emph{toric} if $(Y_0,A_0)$ is a toric surface polarised by a torus-invariant $\R$-divisor and each blowup map $\pi_n$ is equivariant.
\end{definition}

We associate a toric pseudo-polarised Looijenga tower $(\mY_\Omega,\mA_\Omega)$ to a convex domain $\Omega$. This will have the property $\on{wt}(\Omega)=\on{wt}(\mY_\Omega,\mA_\Omega)$. We write $\on{wt}(\Omega)=\{c\}\cup\on{wt}^-(\Omega)$.

Consider $\pr^2$ with moment image $\Delta$ shown in Fig.~\ref{fig:p2} where the lower left vertex is the origin. We denote the hyperplanes corresponding to the three edges of $\Delta$ by $H_0,H_1,H_2$ as shown.

\begin{figure}[h]
\caption{Moment polytope of $\pr^2$}
\label{fig:p2}
\begin{center}
\begin{tikzpicture}[scale=1]
\foreach \i in {0,...,1}
{
\foreach \j in {0}
{\node (\i\j) at (\i,\j){\tiny $\bullet$};
}
\node (0\i) at (0,\i){\tiny $\bullet$};
}

\draw (0,0) to (0,1) to (1,0) to (0,0);

\node (l0) at (1.4,1.4){\small $H_0$};
\node (l1) at (-1,0.5){\small $H_2$};
\node (l2) at (0.5,-1){\small $H_1$};

\draw[->] (l0) to (0.45,0.65);
\draw[->] (l2) to (0.5,-0.05);
\draw[->] (l1) to (-0.05,0.5);
\end{tikzpicture}
\end{center}
\end{figure}

We start with $(Y,L,A)=(\pr^2,H_0+H_1+H_2,cH_0)$. Set $\mP_\Omega=\mP_{(Y,L)}$. We will construct a big and nef weight function on $\mP_\Omega$ via the recursion defining the weight sequence for $\Omega$, and hence a (toric) pseudo-polarised Looijenga tower.

Each element $p\in\mP_\Omega$ by definition corresponds to a node on a toric blowup of $(Y,L)$ but from Def.~\ref{def:conc_wt} and Def.~\ref{def:conv_wt} $p$ also corresponds to a step in the weight sequence recursion. Recall the construction of $\mP_{(Y,L)}=\bigcup_{n\geq0}\mP_n$. In this notation the elements of $\mP_0$ correspond to the three torus-fixed points of $\pr^2$. We assign weight zero to the torus-fixed point $p_1=H_1\cap H_2$ whose moment image is the origin and to all its descendants, capturing the fact that there will be no blowups performed with that centre.

The two other points $p_2,p_3\in\mP_0$ correspond to the concave domains $\Omega_2$ and $\Omega_3$ from Def.~\ref{def:conv_wt}. Set $\on{wt}(p_i)=a_i$, where $\Delta_{a_i}$ is the largest regular triangle that fits inside $\Omega_i$ for $i=2,3$. Iterating this procedure assigns a weight to each element of $\mP_\Omega$ as the side length of the largest regular triangle that fits inside the corresponding concave domain.

More precisely, we fix notation as follows. Let $\Omega_2$ and $\Omega_3$ be as above. Applying the weight sequence recursion to $\Omega_2$ yields two concave domains $\Omega_{22}$ and $\Omega_{23}$ and similarly applying it to $\Omega_3$ yields concave domains $\Omega_{32}$ and $\Omega_{33}$. Repeating this process yields the diagram in Fig.~\ref{fig:wt_rec}(a). Notice that this is naturally in bijection with the part of the Haase diagram of the poset $\mP_\Omega$ excluding the $2$-valent tree with maximum $p_1$. We denote by $\Delta(q)$ the concave domain (i.e.~either $\Omega_2$ or $\Omega_3$ in Def.~\ref{def:conc_wt}) corresponding to $q\in\mP_\Omega\setminus\{q\in\mP_\Omega:q<p_1\}$.

We define a weight function on $\mP_\Omega$ by
$$\on{wt}(q)=\begin{cases}
0 & q<p_1 \\
a(q) & \text{else}
\end{cases}$$
where $\Delta_{a(q)}$ is the largest regular triangle contained in $\Omega_q$. This is shown in Fig.~\ref{fig:wt_rec}(b) with the same indexing as in Fig.~\ref{fig:wt_rec}(a). Observe that this weight function is big and nef since the associated polarised toric surface $(Y_n,A_n)$ corresponds to the polytope $\Omega_n$ obtained after the $n$th step of the weight sequence recursion; for comparison, see \cite[\S3.2-3.3]{cri_sym_19}.

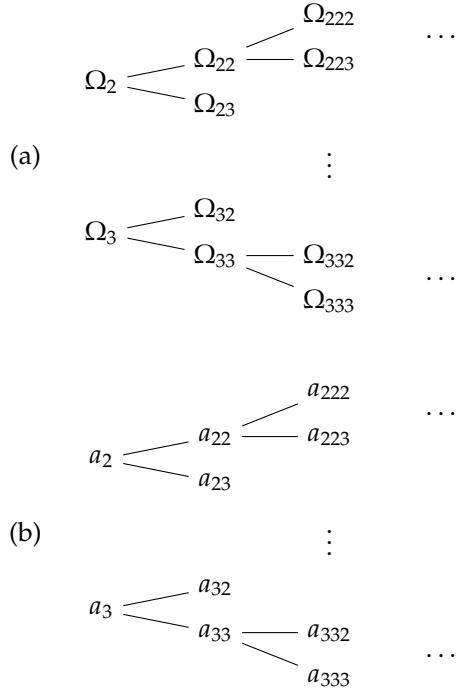
\begin{figure}[h]
\caption{Weight sequence recursion}
\label{fig:wt_rec}
\begin{center}
\begin{tikzpicture}
\small
\node (la) at (-1,0){(a)};
\node (la) at (-1,-5){(b)};

\node (a1) at (0,1){$\Omega_2$};
\node (a2) at (0,-1){$\Omega_3$};

\node (b1) at (1.5,1.3){$\Omega_{22}$};
\node (b2) at (1.5,0.7){$\Omega_{23}$};
\node (b3) at (1.5,-0.7){$\Omega_{32}$};
\node (b4) at (1.5,-1.3){$\Omega_{33}$};

\node (c1) at (3,1.9){$\Omega_{222}$};
\node (c2) at (3,1.3){$\Omega_{223}$};

\node (e1) at (3,0){$\vdots$};

\node (c7) at (3,-1.3){$\Omega_{332}$};
\node (c8) at (3,-1.9){$\Omega_{333}$};

\node (e2) at (4.5,1.6){$\dots$};
\node (e3) at (4.5,-1.6){$\dots$};

\draw (b1) to (a1);
\draw (b2) to (a1);
\draw (b3) to (a2);
\draw (b4) to (a2);
\draw (c1) to (b1);
\draw (c2) to (b1);
\draw (c7) to (b4);
\draw (c8) to (b4);

\node (a1) at (0,-4){$a_2$};
\node (a2) at (0,-6){$a_3$};

\node (b1) at (1.5,1.3-5){$a_{22}$};
\node (b2) at (1.5,0.7-5){$a_{23}$};
\node (b3) at (1.5,-0.7-5){$a_{32}$};
\node (b4) at (1.5,-1.3-5){$a_{33}$};

\node (c1) at (3,1.9-5){$a_{222}$};
\node (c2) at (3,1.3-5){$a_{223}$};

\node (e1) at (3,0-5){$\vdots$};

\node (c7) at (3,-1.3-5){$a_{332}$};
\node (c8) at (3,-1.9-5){$a_{333}$};

\node (e2) at (4.5,1.6-5){$\dots$};
\node (e3) at (4.5,-1.6-5){$\dots$};

\draw (b1) to (a1);
\draw (b2) to (a1);
\draw (b3) to (a2);
\draw (b4) to (a2);
\draw (c1) to (b1);
\draw (c2) to (b1);
\draw (c7) to (b4);
\draw (c8) to (b4);
\end{tikzpicture}
\end{center}
\end{figure}

\begin{figure}[h]
\caption{Constructing $\mP_\Omega$}
\label{fig:wt_poset}
\begin{center}
\begin{tikzpicture}[scale=0.5]
\foreach \i in {0,...,5}
{
\foreach \j in {0,...,5}
{
\node (\i\j) at (\i,\j){\tiny $\bullet$};
}
}

\draw (0,0) to (4,0);
\draw (0,0) to (0,4) to (2,3) to (4,1) to (4,0);
\draw[dashed] (0,5) to (5,0);
\draw[dashed] (0,4) to (0,5);
\draw[dashed] (4,0) to (5,0);

\foreach \k in {0,...,1}
{
\foreach \i in {0,...,3}
{
\foreach \j in {0,...,2}
{
\node (a) at (7+\i,4*\k+\j-1/2){\tiny $\bullet$};
}
}
}

\draw (7,5.5) to (7,3.5) to (8,3.5) to (7,5.5);
\draw[dashed] (7,4.5) to (8,3.5);
\draw (7,-0.5) to (7,0.5) to (8,-0.5) to (7,-0.5);

\node (p1) at (8.5,-3){$1$};
\node (p2) at (8.5,-5){$1$};

\foreach \k in {1}
{
\foreach \i in {0,...,3}
{
\foreach \j in {0,...,2}
{
\node (a) at (12+\i,4*\k+\j-1/2){\tiny $\bullet$};
}
}
}

\draw (12,4.5) to (12,3.5) to (13,3.5) to (12,4.5);

\node (p11) at (13.5,-2.5){$1$};
\node (p12) at (13.5,-3.5){$0$};

\node (p21) at (13.5,-4.5){$0$};
\node (p22) at (13.5,-5.5){$0$};

\draw (p11) to (p1);
\draw (p12) to (p1);
\draw (p21) to (p2);
\draw (p22) to (p2);

\node (la) at (2.5,6.5){(a)};
\node (lb) at (8.5,6.5){(b)};
\node (lc) at (13.5,6.5){(c)};
\end{tikzpicture}
\end{center}
\end{figure}

\begin{example}
We will work out the construction of $\mP_\Omega$ in detail for the convex domain $\Omega$ from Fig.~\ref{fig:wt_seq}(b) with weight sequence $(5;1,1,1)$. In Fig.~\ref{fig:wt_poset}(a) we show $\Omega$ with the first step of the weight sequence recursion expressing the $4$-ball $B^4(5)$ as a union of $X_\Omega,B^4(1)$ and the ellipsoid $E(1,2)$. In Fig.~\ref{fig:wt_poset}(b) we show the two regions $\Omega_2$ and $\Omega_3$. The weights for elements of $\mP_0\subseteq\mP_\Omega$ are illustrated below the domains. In Fig.~\ref{fig:wt_poset}(c) the final stage of the recursion is shown -- of the four concave domains coming from $\Omega_2$ and $\Omega_3$ only one is nonempty, and is equal to $\Delta_1$ -- and the corresponding weights are listed below. Throughout we omit the tree with maximum $p_1$ with weights all zero.
\end{example}

This example terminated after finitely many stages because $\Omega$ was a rational-sloped polytope. In this case all $(Y_n,A_n)$ for large enough $n$ are equivalent to the polarised toric surface $(Y_\Omega,A_\Omega)$ associated to $\Omega$. That is, for large enough $n$ there is a series of toric blowups $\pi\colon Y_n\to Y_\Omega$ with $A_n=\pi^*A_\Omega$.

Note that the sequence of polarised toric surfaces $(Y_n,A_n)$ produced using the structure of $\mP_{(Y,L)}$ as a graded poset from \S\ref{sec:wt_poset} recovers the sequence of approximations used by Hutchings in \cite[\S3]{hut_ech_19} by setting $\Omega_n$ to be the polytope for $A_n$.

\subsection{Algebraic capacities for Looijenga towers} \label{sec:alg_cap}

Recall the construction of \emph{algebraic capacities} for a $\Q$-factorial pseudo-polarised surface $(Y,A)$:
$$\calg_k(Y,A):=\inf_{\on{Nef}(Y)_\Z}\{D\cdot A:\chi(D)\geq k+\chi(\mO_Y)\}$$
When $Y$ is smooth this reduces to
$$\calg_k(Y,A):=\inf_{\on{Nef}(Y)_\Z}\{D\cdot A:I(D)\geq2k\}$$
where $I(D):=D\cdot(D-K_Y)$. It was shown in \cite[Prop.~2.11]{wor_alg_20} for all smooth or toric pseudo-polarised surfaces $(Y,A)$ we have that $\calg_k(Y,A)$ is obtained by ranging over effective $\Z$-divisors in place of nef $\Z$-divisors.

\begin{lemma} \label{lem:lim_alg_cap} Suppose $(\mY,\mA)=\{(Y_n,L_n,A_n)\}_{\Z_{\geq0}}$ is a pseudo-polarised Looijenga tower that is smooth or toric. Then
$$\lim_{n\to\infty}\calg_k(Y_n,A_n)$$
exists and is finite.
\end{lemma}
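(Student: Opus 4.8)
The plan is to show that the sequence $\bigl(\calg_k(Y_n,A_n)\bigr)_{n\ge0}$ is non-increasing and bounded below by $0$; it then converges to its infimum, a finite non-negative real number. Throughout I would use the reformulation of \cite[Prop.~2.11]{wor_alg_20}, available precisely because $\mY$ is smooth or toric, that $\calg_k(Y_n,A_n)$ equals the infimum of $D\cdot A_n$ over \emph{effective} $\Z$-divisors $D$ on $Y_n$ with $\chi(D)\ge k+\chi(\mO_{Y_n})$ (the argument using nef $\Z$-divisors is entirely parallel). Two preliminary observations: since each $Y_n$ is smooth or toric and rational, $\chi(\mO_{Y_n})$ is independent of $n$ (it equals $1$); and the feasible set is non-empty for every $k$, since for an ample Cartier divisor $H$ on $Y_n$ Serre vanishing gives $\chi(mH)=h^0(mH)\to\infty$ as $m\to\infty$, so some effective $mH$ is feasible. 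Hence $\calg_k(Y_n,A_n)$ is the infimum of a non-empty set, and since a feasible $D$ is effective while $A_n$ is nef we have $D\cdot A_n\ge0$; thus $0\le\calg_k(Y_n,A_n)<\infty$ for all $n$.

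For the monotonicity I would pull feasible divisors back along the toric blowups. Let $D$ be feasible for $\calg_k(Y_{n-1},A_{n-1})$ and consider its total transform $\pi_n^*D$ on $Y_n$, which is again effective. Because toric surfaces (and smooth surfaces) have rational singularities, $R\pi_{n*}\mO_{Y_n}=\mO_{Y_{n-1}}$; hence by the projection formula and the Leray spectral sequence $\chi(Y_n,\pi_n^*D)=\chi(Y_{n-1},D)\ge k+\chi(\mO_{Y_{n-1}})=k+\chi(\mO_{Y_n})$, so $\pi_n^*D$ is feasible for $\calg_k(Y_n,A_n)$. Moreover $\pi_n^*D\cdot E_n=0$ and pullback preserves intersection numbers, so $\pi_n^*D\cdot A_n=\pi_n^*D\cdot(\pi_n^*A_{n-1}-a_nE_n)=D\cdot A_{n-1}$. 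Taking the infimum over all feasible $D$ gives $\calg_k(Y_n,A_n)\le\calg_k(Y_{n-1},A_{n-1})$, so the sequence is non-increasing; combined with the lower bound above it converges to $\inf_n\calg_k(Y_n,A_n)\in[0,\calg_k(Y_0,A_0)]\subset\R$, which is finite.

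The main obstacle is the step asserting that $\pi_n^*D$ is again an \emph{integral} effective divisor: when $Y_{n-1}$ is singular the total transform of a Weil divisor is only a $\Q$-divisor in general, and correspondingly the projection-formula computation of $\chi$ requires $D$ to be Cartier so that $\pi_n^*\mO_{Y_{n-1}}(D)=\mO_{Y_n}(\pi_n^*D)$. I expect to handle this by first establishing the lemma for smooth $\mY$ — where every divisor is Cartier and the argument above is unconditional — and then reducing the toric case to it: a toric polarised Looijenga tower is equivalent, in the sense of \S\ref{sec:wt_poset}, to a smooth one obtained by prepending a toric resolution of $(Y_0,L_0)$, so it suffices to know that $\calg_k$ is invariant under such equivalences, which is the kind of birational-invariance property developed in \cite{wor_alg_20} and revisited in \S\ref{sec:alg_cap}. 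An alternative, avoiding the reduction, would be to work directly with those torus-invariant divisors whose support functions are integral on the cone subdivided by $\pi_n$ and argue that they still compute $\calg_k$ in the toric case; either way, the integrality bookkeeping is the only nontrivial point.
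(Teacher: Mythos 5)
Your proof is correct and takes essentially the same approach as the paper: pull the optimiser on $Y_{n-1}$ back to $Y_n$, note that $\chi$ and the intersection with the polarisation are unchanged, conclude the sequence is non-increasing and bounded below. The paper's version is terser (it works with nef optimisers and simply asserts $\chi(\pi_{n+1}^*D_n)=\chi(D_n)$ ``in the smooth or toric cases''); your explicit flagging of the integrality/Cartier issue for singular toric $Y_n$ and the proposed reduction to the smooth case via the equivalence of \S\ref{sec:alg_cap} is a legitimate and somewhat more careful way to justify that assertion.
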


This result also holds for any tower $(\mY,\mA)=\{(Y_n,A_n)\}$ of pseudo-polarised surfaces related by blowups. Notice that in this case $\chi(\mO_{Y_n})=\chi(\mO_{Y_m})=:\chi(\mO_\mY)$ for all $n,m\in\Z_{\geq0}$.

\begin{proof} Let $D_n$ be a nef $\Z$-divisor computing $\calg_k(Y_n,A_n)$. We have in the smooth or toric cases that $\chi(\pi_{n+1}^*D_n)=\chi(D_n)\geq k+\chi(\mO_\mY)$ so that
$$\calg_k(Y_{n+1},A_{n+1})\leq\pi_{n+1}^*D_n\cdot A_{n+1}=D_n\cdot A_n=\calg_k(Y_n,A_n)$$
It follows that $\calg_k(Y_n,A_n)$ is a decreasing sequence in $n$ that is bounded below, and is hence convergent.
\end{proof}

We thus define
$$\calg_k(\mY,\mA):=\lim_{n\to\infty}\calg_k(Y_n,A_n)$$
We will see in the next section that this definition extends the relationship between algebraic capacities of polarised algebraic surfaces and ECH capacities of related symplectic $4$-manifolds. Next we note how our notion of equivalence from \S\ref{sec:loo} was motivated by the structure of algebraic capacities.

\begin{lemma} \label{lem:equiv} Let $(Y,L,A)$ and $(Y',L',A')$ be smooth or toric Looijenga pairs. If $(Y,L,A)$ and $(Y',L',A')$ are equivalent, then
$$\calg_k(Y,A)=\calg_k(Y',A')$$
for all $k\in\Z_{\geq0}$.
\end{lemma}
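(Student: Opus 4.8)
The plan is to reduce the statement to a comparison of the quadratic optimisation problems defining $\calg_k$ on a common dominating Looijenga pair. Recall that $(Y,L,A)$ and $(Y',L',A')$ being equivalent means there is a Looijenga pair $(Y'',L'')$ together with two compositions of toric blowups $\pi\colon Y''\to Y$ and $\pi'\colon Y''\to Y'$ such that $\pi^*A=(\pi')^*A'$. The key observation is that each of $\pi$ and $\pi'$ is itself a finite stage in a pseudo-polarised Looijenga tower (with all intermediate weights recorded by the blowup decomposition of $A$ and $A'$), so it suffices to show that $\calg_k$ is unchanged under a single toric blowup $\pi_{n+1}\colon Y_{n+1}\to Y_n$ with $A_{n+1}=\pi_{n+1}^*A_n$; equivalence then follows by chaining these equalities up to $Y''$ from both sides.

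First I would handle the inequality $\calg_k(Y_{n+1},A_{n+1})\le\calg_k(Y_n,A_n)$, which is exactly the computation already carried out in the proof of Lemma~\ref{lem:lim_alg_cap}: if $D_n$ is a nef (equivalently effective, by \cite[Prop.~2.11]{wor_alg_20} in the smooth or toric case) $\Z$-divisor computing $\calg_k(Y_n,A_n)$, then $\pi_{n+1}^*D_n$ is nef, has $\chi(\pi_{n+1}^*D_n)=\chi(D_n)\ge k+\chi(\mO_\mY)$, and satisfies $\pi_{n+1}^*D_n\cdot A_{n+1}=D_n\cdot A_n$. The substance is the reverse inequality $\calg_k(Y_n,A_n)\le\calg_k(Y_{n+1},A_{n+1})$. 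For this I would take a divisor $D_{n+1}$ on $Y_{n+1}$ computing the right-hand side and push it forward: write $D_{n+1}=\pi_{n+1}^*(\pi_{n+1})_*D_{n+1}-eE_{n+1}$ for some $e\in\Z$ (using $\Q$-factoriality to make sense of $(\pi_{n+1})_*$, then clearing denominators), set $D_n:=(\pi_{n+1})_*D_{n+1}$, and check that (i) $D_n$ is still admissible for the optimisation problem at level $n$ and (ii) $D_n\cdot A_n\le D_{n+1}\cdot A_{n+1}$. Point (ii) is immediate: since $A_{n+1}=\pi_{n+1}^*A_n$ and $E_{n+1}\cdot\pi_{n+1}^*A_n=0$, we get $D_{n+1}\cdot A_{n+1}=\pi_{n+1}^*D_n\cdot\pi_{n+1}^*A_n=D_n\cdot A_n$ exactly.

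The main obstacle is point (i): ensuring the pushforward $D_n$ still satisfies the constraint $\chi(D_n)\ge k+\chi(\mO_{Y_n})$ (or $I(D_n)\ge 2k$ in the smooth case) and can be taken nef or at least effective. Here I would use the standard behaviour of Euler characteristics under blowup at a smooth point together with the inequality $e(e-1)\ge 0$: one has $\chi(\pi_{n+1}^*D_n-eE_{n+1})=\chi(D_n)-\tfrac{1}{2}e(e+1)$ when $\mY$ is smooth, with an analogous formula in the toric case using $\chi(\mO_Y)=1$, so $\chi(D_{n+1})\le\chi(D_n)$ and the constraint is preserved in the direction we need. If $D_{n+1}$ was chosen effective (permissible by \cite[Prop.~2.11]{wor_alg_20}) then $D_n=(\pi_{n+1})_*D_{n+1}$ is automatically effective, so no nefness issue arises and we may optimise over effective divisors throughout. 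Chaining the single-blowup equality along $\pi$ and $\pi'$ and using $\pi^*A=(\pi')^*A'$ then gives $\calg_k(Y,A)=\calg_k(Y'',\pi^*A)=\calg_k(Y'',(\pi')^*A')=\calg_k(Y',A')$, completing the proof. A minor point to dispatch is that toric blowups as defined here are blowups at torus-fixed points, hence smooth points of the (possibly singular) surface, so the local blowup formulas apply verbatim; and that compositions of toric blowups preserve the smooth-or-toric hypothesis under which \cite[Prop.~2.11]{wor_alg_20} and Lemma~\ref{lem:lim_alg_cap} are available.
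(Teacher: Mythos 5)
The paper itself offers no argument here beyond deferring to \cite[Prop.~3.4 + Prop.~3.5]{wor_alg_20}; your pullback/pushforward proof is a reasonable reconstruction of what those propositions encode, so in substance the routes coincide. Two corrections are needed. First, the quantity you should invoke is $e(e+1)\ge 0$ (true for every integer, being a product of consecutive integers), not $e(e-1)$: with $D_{n+1}=\pi_{n+1}^*D_n-eE_{n+1}$ and $K_{Y_{n+1}}=\pi_{n+1}^*K_{Y_n}+E_{n+1}$ one computes, in the smooth case, $D_{n+1}\cdot(D_{n+1}-K_{Y_{n+1}})=D_n\cdot(D_n-K_{Y_n})-e(e+1)$, hence $\chi(D_{n+1})=\chi(D_n)-\tfrac12 e(e+1)\le\chi(D_n)$, which is the direction you need. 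This is only a typo and does not disturb the conclusion. Second, and more substantively, your closing dispatch that toric blowups occur at smooth points is not correct in general: the nodes of $L$ are torus-fixed points, and on a singular toric $Y$ these may be cyclic quotient singularities, so the smooth-point identities $E^2=-1$ and $\chi(\pi^*D-eE)=\chi(D)-\tfrac12 e(e+1)$ need not hold as stated. In the singular toric case you should instead either work with the $\chi(D)\ge k+\chi(\mO_Y)$ formulation directly using the appropriate Riemann--Roch for $\Q$-factorial toric surfaces, or reduce to the smooth case in the way the paper does elsewhere, namely via \cite[Prop.~4.19]{wor_alg_20} as in the proof of Thm.~\ref{thm:loo_main}. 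With that gap filled, your argument is complete and matches the cited result.
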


This is essentially the content of \cite[Prop.~3.4 + Prop.~3.5]{wor_alg_20}. It is clear that the anticanonical divisors play no role in this result. The value of Lem.~\ref{lem:equiv} is in allowing us to fix a particular universal Looijenga tower and choose a weight function on it to calculate the algebraic capacities of any pseudo-polarised Looijenga tower. We will hence also not specify the function $h$ we have chosen to produce a bona fide Looijenga tower (indexed by $\Z_{\geq0}$) from the poset $\mP_{(Y,L)}$.

We end this subsection by showing that one can capture the algebraic capacities of $(\mY,\mA)$ intrinsically in terms of divisors on $\mY$. Define $\on{Nef}(\mY)$ to be the submonoid of $\on{Div}^+(\mY)_\R$ consisting of divisors $\mD$ such that $\mD\cdot E\geq0$ for all $E\in\on{\ol{NE}}(Y_n)$ for each $n$. Note that $\on{Nef}(Y_n)$ naturally embeds into $\on{Nef}(\mY)$. Set $\on{Nef}(\mY)_\Z=\on{Nef}(\mY)\cap\on{Div}(\mY)_\Z$.

\begin{prop} \label{prop:intrinsic} If $(\mY,\mA)$ is a smooth or toric pseudo-polarised Looijenga tower, then
$$\calg_k(\mY,\mA)=\inf_{\mD\in\on{Nef}(\mY)_\Z}\{\mD\cdot\mA:\mD\cdot(\mD-K_\mY)\geq2k\}$$
\end{prop}

We write $\mD=(D_0,d_1,\dots)$ for a $\mathbb{K}$-divisor on $\mY$ where $D_0$ is a $\mathbb{K}$-divisor on $Y_0$ and $\{d_n\}_{n\in\Z_{\geq1}}$ is the weight sequence of $\mD$. The assumption that $\mY$ is smooth or toric allows us by Lem.~\ref{lem:equiv} to reduce to the smooth case where the constraint is given in terms of $I(D)$.

\begin{proof} Since $\on{Nef}(Y_n)_\Z$ can be viewed as a subset of $\on{Nef}(\mY)_\Z$ we obtain
$$\calg_k(Y_n,A_n)\geq\inf_{\mD\in\on{Nef}(\mY)_\Z}\{\mD\cdot\mA:\mD\cdot(\mD-K_\mY)\geq2k\}$$
for each $n$, and so
$$\calg_k(\mY,\mA)\geq\inf_{\mD\in\on{Nef}(\mY)_\Z}\{\mD\cdot\mA:\mD\cdot(\mD-K_\mY)\geq2k\}$$
For the converse it suffices that for each $\eps>0$ we can find $N\in\Z_{\geq0}$ such that
$$\calg_k(Y_n,A_n)\leq\inf_{\mD\in\on{Nef}(\mY)_\Z}\{\mD\cdot\mA:\mD\cdot(\mD-K_\mY)\geq2k\}+\eps$$
for all $n>N$. Let $\mD_0=(D_0,d_1,\dots)\in\on{Nef}(\mY)_\Z$ be such that
$$\mD_0\cdot\mA\leq\inf_{\mD\in\on{Nef}(\mY)_\Z}\{\mD\cdot\mA:\mD\cdot(\mD-K_\mY)\geq2k\}+\eps$$
As $\mD_0\in\on{Div}^+(\mY)$ we must have that $d_i=0$ for all $i>N$ for some $N\in\Z_{\geq0}$. There is thus a nef $\Z$-divisor $D_n\in\on{Nef}(Y_n)$ for $n>N$ that is mapped to $\mD_0$ under the embedding $\on{Nef}(Y_n)\to\on{Nef}(\mY)$. Hence, for all $n>N$,
$$\calg_k(Y_n,A_n)\leq D_n\cdot A_n=D_n\cdot\mA=\mD_0\cdot\mA\leq\inf_{\mD\in\on{Nef}(\mY)_\Z}\{\mD\cdot\mA:\mD\cdot(\mD-K_\mY)\geq2k\}+\eps$$
as required.
\end{proof}

In fact this infimum is realised in the toric case via a sympletic argument using Prop.~\ref{prop:ech_alg} below.

\section{Sub-leading asymptotics of ECH capacities}

\subsection{Looijenga towers and ECH}

To each symplectic $4$-manifold $(X,\omega)$ ECH associates an increasing sequence
$$\{\cech_k(X,\omega)\}_{k\in\Z_{\geq0}}$$
of (extended) real numbers called the \emph{ECH capacities} of $(X,\omega)$. These obstruct symplectic embeddings in the sense that
$$(X,\omega)\se(X',\omega')\Longrightarrow\cech_k(X,\omega)\leq\cech_k(X',\omega')\text{ for all $k$}$$

\begin{prop} \label{prop:ech_alg} For any convex domain $\Omega\subseteq\R^2$ the toric polarised Looijenga tower $(\mY_\Omega,\mA_\Omega)$ has
$$\cech_k(X_\Omega)=\calg_k(\mY_\Omega,\mA_\Omega)$$
\end{prop}

\begin{proof} Consider the sequence of polygons $\{\Omega_n\}_{n\in\Z_{\geq0}}$ arising as the polytopes associated to the divisors $A_n$. We know that $\lim_{n\to\infty}\cech_k(X_{\Omega_n})=\cech_k(X_\Omega)$ by Hausdorff continuity \cite[Lem.~2.3]{ccfhr_sym_14}. Since $\Omega_n$ is rational-sloped \cite[Thm.~1.5]{wor_ech_19} gives that $\cech_k(X_{\Omega_n})=\calg_k(Y_n,A_n)$ and so the result follows from Lem.~\ref{lem:lim_alg_cap}.
\end{proof}

Using the same sequence of approximations we prove a result similar to \cite[Lem.~3.6]{hut_ech_19}.

\begin{prop}[{c.f.~\cite[Lem.~3.6]{hut_ech_19}}] Let $\Omega$ be a convex domain whose weight sequence has head $c$. Then
$$3c-\sum_{a\in\on{wt}(\Omega)}a=a(\Omega)+b(\Omega)+\laff(\partial^+\Omega)$$
\end{prop}

\begin{proof} Note that
$$3c-\sum_{a\in\on{wt}(\Omega)}a=-K_{\mY_\Omega}\cdot\mA_\Omega=\lim_{n\to\infty}-K_{Y_n}\cdot A_n=\lim_{n\to\infty}a(\Omega_n)+b(\Omega_n)+\laff(\partial^+\Omega_n)$$
and the result follows from continuity of $a(\cdot)$ and $b(\cdot)$ and analysis similar to \cite[Lem.~3.6]{hut_ech_19}.
\end{proof}

\subsection{Asymptotics for algebraic capacities}

Just like for ECH capacities of symplectic $4$-manifolds and algebraic capacities of pseudo-polarised algebraic surfaces we have a `Weyl law' controlling the growth of $\calg_k(\mY,\mA)$.

\begin{thm} \label{thm:ind_weyl} Let $(\mY\mA)$ be a pseudo-polarised Looijenga tower. Then
$$\lim_{k\to\infty}\frac{\calg_k(\mY,\mA)^2}{k}=2\mA^2$$
\end{thm}

We will not prove this directly, but will instead appeal to the analysis of the \emph{error terms}
$$\ealg_k(\mY,\mA):=\calg_k(\mY,\mA)-\sqrt{2\mA^2k}$$
below, where we will show that $\ealg_k(\mY,\mA)=O(1)$. Thm.~\ref{thm:ind_weyl} follows immediately from Prop.~\ref{prop:ech_alg} when $(\mY,\mA)$ is a toric polarised Looijenga tower arising from a convex domain.

These error terms associated to $(\mY,\mA)$ are analogous to the error terms in ECH
$$e_k(X,\omega):=\cech_k(X,\omega)-\sqrt{4\on{vol}(X,\omega)k}$$
and agree when $(\mY,\mA)$ comes from a convex domain.

\subsection{Bounds for error terms}

For a pseudo-polarised Looijenga tower $(\mY,\mA)$ we define a divisor $-K_\mY^+$ on $\mY$ by
$$-K_\mY^+=-K_\mY+K_{Y_0}-K_{Y_0}^+$$
where $-K_{Y_0}^+$ is the support of $A_0$ viewed as a reduced divisor. As a sequence of divisors indexed by $n$ like in \S\ref{sec:div}, $-K_\mY^+$ has as its $n$th term the support of $A_n$ viewed as a reduced divisor. In this sense $-K_\mY^+$ can be viewed as the `support' of $\mA$. When $(\mY,\mA)$ is actually a pseudo-polarised toric surface corresponding to a rational-sloped polygon $\Omega$, we see that $-K_\mY^+$ is the preimage of $\partial^+\Omega$ under the moment map, giving $-K_\mY^+\cdot\mA=\laff(\partial^+\Omega)$. Our next aim is to prove the following theorem.

\begin{thm} \label{thm:loo_main}
Suppose $(\mY,\mA)$ is a pseudo-polarised Looijenga tower such that $\mY$ is smooth or toric. Then
\begin{align*}
\frac{1}{2}K_\mY\cdot\mA-K_\mY^+\cdot\mA&\geq\limsup_{k\to\infty}\ealg_k(\mY,\mA) \\
&\geq\liminf_{k\to\infty}\ealg_k(\mY,\mA)\geq\frac{1}{2}K_\mY\cdot\mA
\end{align*}
In particular, $\ealg_k(\mY,\mA)=O(1)$.
\end{thm}

\begin{cor} \label{cor:main} Let $X_\Omega$ be a convex toric domain. Then
\begin{align*}
-\frac{1}{2}\left(a(\Omega)+b(\Omega)-\frac{1}{2}\laff(\partial^+\Omega)\right)&\geq\limsup_{k\to\infty}e_k(\mY,\mA) \\
&\geq\liminf_{k\to\infty}e_k(\mY,\mA)\geq-\frac{1}{2}\left(a(\Omega)+b(\Omega)+\frac{1}{2}\laff(\partial^+\Omega)\right)
\end{align*}
When $\Omega$ has no rational-sloped edges we have that $e_k(X_\Omega)$ is convergent and
$$\lim_{k\to\infty}e_k(X_\Omega)=-\frac{1}{2}(a(\Omega)+b(\Omega))$$
\end{cor}

Over the next two subsections we will establish these asymptotic upper and lower bounds. Since $\laff(\partial^+\Omega)=0$ when $\partial^+\Omega$ has no rational-sloped edges the criterion for convergence follows immediately.

We will assume that $\mY$ is smooth, passing to the singular toric case by \cite[Prop.~4.19]{wor_alg_20} that easily extends to the case of toric Looijenga towers.

\subsection{Upper bound for error terms}

Observe that any nef $\Z$-divisor $D$ on a $\Q$-factorial surface $Y$ gives an upper bound
$$\calg_k(Y,A)\leq D\cdot A$$
when $2k\leq I(D)$. By \cite[Prop.~2.11]{wor_alg_20} this also works if $D$ is an effective $\Z$-divisor. Let $(\mY,\mA)=\{(Y_n,L_n,A_n)\}_{n\in\Z_{\geq0}}$ be a pseudo-polarised Looijenga tower. We obtain an upper bound for $\calg_k(Y_n,A_n)$ in terms of $k$ and $n$ by using $\Z$-divisors of the form $\lc dA_n\rc$ and then considering how the resulting bound behaves as $n$ and $k$ become large. We let the components of $A_n$ be denoted $D_1,\dots,D_s$; that is, $-K_{Y_n}^+=\sum_{i=1}^s D_i$.

Consider the constraint
$$I(\lc dA_n\rc)=(dA_n+\Delta_n)\cdot(dA_n+\Delta_n-K_{Y_n})\geq 2k$$
where $\Delta_n=\lc dA_n\rc-dA_n$. That is,
$$d^2A_n^2-dA_n\cdot K_{Y_n}+2dA_n\cdot\Delta_n-2k+\Delta_n^2-\Delta_n\cdot K_{Y_n}\geq0$$
Notice that $2dA_n\cdot\Delta_n\geq0$ since $\Delta_n$ is effective and so we ignore that term. We bound $\Delta_n\cdot\Delta_n-\Delta_n\cdot K_{Y_n}$ in terms of the geometry of $Y_n$. Notice that
$$\Delta_n^2\geq\sum_{D_i^2<0}D_i^2$$
and
$$-\Delta_n\cdot K_{Y_n}\geq\sum_{D_i^2<-1}(2+D_i^2)$$
giving
$$\Delta_n^2-\Delta\cdot K_{Y_n}\geq-\#\{\text{$(-1)$-curves on $Y_n$}\}+2\sum_{D_i^2<-1}(1+D_i^2)$$
Hence we see that $I(\lc dA_n\rc)\geq 2k$ when
$$d^2A_n^2-dA_n\cdot K_{Y_n}-2k-\#\{\text{$(-1)$-curves on $Y_n$}\}+2\sum_{D_i^2<-1}(1+D_i^2)\geq0$$
or when $d$ is bounded below by the larger solution of the quadratic obtained by replacing $\geq$ with $=$ in the above. Write $-A_n\cdot K_{Y_n}/A_n^2=:\kappa_n$. We thus have $I(\lc dA_n\rc)\geq 2k$ if
$$d\geq-\frac{\kappa_n}{2}+\sqrt{\frac{2k}{A_n^2}+\frac{\#\{\text{$(-1)$-curves on $Y_n$}\}-2\sum_{D_i^2<-1}(1+D_i^2)}{A_n^2}+\frac{\kappa_n^2}{4A_n^2}}$$
Set
$$F(n)=\#\{\text{$(-1)$-curves on $Y_n$}\}-2\sum_{D_i^2<-1}(1+D_i^2)$$
We study how $F(n)$ changes with $n$ by measuring $F(n+1)-F(n)$; i.e.~how $F$ changes under a single blowup in a torus-fixed point between two torus-invariant curves $C_1$ and $C_2$. Let $\{i,j\}=\{1,2\}$. The options are:
\begin{itemize}
\item $C_1^2>0$ and $C_2^2>0\Longrightarrow F(n+1)-F(n)=1$.
\item $C_i^2=0$ and $C_j^2>0\Longrightarrow F(n+1)-F(n)=2$.
\item $C_i^2=-1$ and $C_j^2>0\Longrightarrow F(n+1)-F(n)=2$.
\item $C_i^2\leq-2$ and $C_j^2>0\Longrightarrow F(n+1)-F(n)=3$.
\item $C_i^2=-1$ and $C_j^2=0\Longrightarrow F(n+1)-F(n)=3$.
\item $C_i^2==1$ and $C_j^2=-1\Longrightarrow F(n+1)-F(n)=3$.
\item $C_i^2\leq-2$ and $C_j^2=0\Longrightarrow F(n+1)-F(n)=4$.
\item $C_i^2\leq-2$ and $C_j^2=-1\Longrightarrow F(n+1)-F(n)=4$.
\item $C_i^2\leq-2$ and $C_j^2\leq-2\Longrightarrow F(n+1)-F(n)=5$.
\end{itemize}
Hence we see that $F(n+1)-F(n)\leq 5$. In the toric case we have $Y_0=\pr^2$ and so $F(n)\leq 5n$ since $\pr^2$ has no negative curves. In general we will have $F(n)\leq 5n+F(0)$ but, as it makes no significant difference to the argument, we will ignore the constant for notational convenience.

Therefore we see that $I(\lc dA_n\rc)\geq 2k$ when
$$d\geq-\frac{\kappa_n}{2}+\sqrt{\frac{2k}{A_n^2}+\frac{5n}{A_n^2}+\frac{\kappa_n^2}{4A_n^2}}=:d_{k,n}$$
It follows that
\begin{align*}
\calg_k(Y_n,A_n)\leq\lc d_{k,n}A_n\rc\cdot A_n&\leq d_{k,n}A_n^2-K_{Y_n}^+\cdot A_n \\
&=-\frac{\kappa_nA_n^2}{2}+\sqrt{2A_n^2k+5A_n^2n+\frac{\kappa_n^2(A_n^2)^2}{4}}-K_{Y_n}^+\cdot A_n
\end{align*}
This is an explicit bound for $\calg_k(Y_n,A_n)$ valid for all $k$ and $n$. We require an elementary lemma from analysis to study what happens as $n$ and $k$ get large.

\begin{lemma} \label{lem:analysis} Suppose $(a_i)$ is a decreasing summable sequence. Let $S(n)=\sum_{i\geq n}a_i^2$. Then there exists a strictly increasing sequence $(n_k)$ of natural numbers such that $n_k=o(\sqrt{k})$ and $S(n_k)=o(1/\sqrt{k})$.
\end{lemma}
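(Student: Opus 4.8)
The plan is to show that summability of $(a_i)$ forces the tail sums $S(n)$ to decay not merely to $0$ but with $nS(n)\to0$, and then to feed this improved decay into a blocking (diagonal) argument that threads the sequence $(n_k)$ between the two required estimates.

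First I would record the elementary consequences of $(a_i)$ being decreasing and summable (non-negativity is automatic: a decreasing sequence with a negative term is not summable). Writing $C:=\sum_i a_i<\infty$ and $T(n):=\sum_{i\ge n}a_i\to0$, monotonicity gives $a_n\le\frac1n\sum_{j\le n}a_j\le C/n$ and the standard estimate $\lfloor n/2\rfloor a_n\le\sum_{i>n/2}a_i\to0$, hence $na_n\to0$. Combining these, $nS(n)=n\sum_{i\ge n}a_i^2\le na_n\sum_{i\ge n}a_i=(na_n)\,T(n)\to0$. (If $S(n)=0$ for some $n$ then $(a_i)$ is eventually zero and any slowly growing $(n_k)$ works, so assume $S(n)>0$ throughout.)

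Next I would build $(n_k)$ by blocks. Using $nS(n)\to0$, for each $j\ge1$ pick $P_j\in\Z_{\ge1}$ with $P_j\ge j$, the $P_j$ strictly increasing, such that $mS(m)\le j^{-3}$ for all $m\ge P_j$; set $K_j:=(j(P_j+1))^2$ and, enlarging if necessary, arrange $1\le K_1<K_2<\cdots$. For $K_j\le k<K_{j+1}$ define $n_k:=\lfloor\sqrt k/j\rfloor$ (and $n_k:=1$ for $k<K_1$). For $k$ in the $j$-th block one has $P_j\le n_k\le\sqrt k/j$: the upper bound gives $n_k=o(\sqrt k)$ since the block index $j=j(k)\to\infty$ as $k\to\infty$, while $n_k\ge P_j\ge j$ gives $n_k\to\infty$. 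For the decay estimate, $k\ge K_j\ge4j^2$ forces $\sqrt k\ge2j$ and hence $n_k\ge\sqrt k/j-1\ge\sqrt k/(2j)$, so $\sqrt k\,S(n_k)=\tfrac{\sqrt k}{n_k}\,(n_kS(n_k))\le 2j\cdot j^{-3}=2j^{-2}\to0$, i.e.\ $S(n_k)=o(1/\sqrt k)$. Finally, since $n_k$ is non-decreasing within each block but may drop slightly at a block boundary, I would pass to $\widehat n_k:=\max_{m\le k}n_m$, which is non-decreasing, still tends to $\infty$, still satisfies $\widehat n_k=o(\sqrt k)$ (split the maximum at $m=\sqrt k$, using that $n_m\le\sqrt m$ always and $n_m\le\sqrt k/j(\lceil\sqrt k\rceil)$ for $m>\sqrt k$), and has $S(\widehat n_k)\le S(n_k)=o(1/\sqrt k)$.

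The only real obstacle is that there is no quantitative control on how fast $S(n)\to0$ — it can be arbitrarily slow — so no closed-form choice of $n_k$ can work, and one is forced into a blocking argument; the crux is precisely that summability of $(a_i)$ upgrades $S(n)\to0$ to $nS(n)\to0$, which is exactly the slack needed to keep $n_k=o(\sqrt k)$ while still forcing $\sqrt k\,S(n_k)\to0$. (I would also note that a sequence of natural numbers indexed by $k$ satisfying $n_k=o(\sqrt k)$ is automatically not strictly increasing, so the statement should be read with "non-decreasing", which is what the construction yields.)
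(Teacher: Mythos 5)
Your proof is correct, and your route to the crucial fact $nS(n)\to0$ is genuinely different from the paper's. The paper first establishes $ia_i\to0$ by reinterpreting the decreasing summable sequence $(a_i)$ as tail probabilities $P(X>i)$ of an integrable random variable and invoking dominated convergence, then derives $S(t)\leq b_t^2/t$ via a two-step estimate through an auxiliary sequence $b_i$. You instead prove $na_n\to0$ by the classical elementary argument $\lfloor n/2\rfloor a_n\leq\sum_{i>n/2}a_i\to0$, and then obtain $nS(n)\leq(na_n)\,T(n)\to0$ directly from the single inequality $S(n)\leq a_n\sum_{i\geq n}a_i$; this is shorter and cleaner, and it dispenses with the probability vocabulary the paper deliberately adopted (the paper itself remarks that "a rather longer but completely elementary proof also exists," but your elementary version is not longer). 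Both proofs then finish with a blocking construction, and yours is comparable in content (choosing thresholds $P_j$ on which $mS(m)\leq j^{-3}$ and block endpoints $K_j$) to the paper's construction via $t_j=\inf\{t:f_t\leq2^{-j}\}$; your explicit verification that the repaired sequence $\widehat n_k=\max_{m\leq k}n_m$ still satisfies both decay rates is more careful than what the paper records. Your final observation that a strictly increasing $\N$-valued sequence cannot be $o(\sqrt k)$ is accurate; the paper implicitly sidesteps this by switching to the $o(k)$ / $o(1/k)$ normalization before constructing anything, whereas you address it by reading the conclusion as "non-decreasing and tending to infinity," which is all the application requires.
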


We use some basic techniques from probability theory to prove this result, though a rather longer but completely elementary proof also exists. Notice that it makes no difference to demand that $n_k=o(k)$ and $S(n_k)=o(1/k)$ instead of $n_k=o(\sqrt{k})$ and $S(n_k)=o(1/\sqrt{k})$, which we adopt for notational convenience.

\begin{proof} We first show that $a_i=o(i)$. We can choose $(a_i)$ to be non-increasing and so we may interpret it as the tail probabilities $a_i= P(X > i)$ for some random variable $X$ with values in $\N$. As $a_i$ is summable, $X$ has finite expectation:
$\mathbb{E}X = \sum_{i}P(X > i) = \sum_i a_i < \infty$. Now,
$$k\cdot a_k = k\cdot P(X > k) = k\cdot \mathbb E 1_{X > k} = \mathbb E k1_{X>k} \leq \mathbb E X1_{X>k}$$
which approaches $0$ as $k\to\infty$ by the dominated convergence theorem. It follows that $a_i \leq b_i/i$ for some $b_i \in o(1)$, which again without loss of generality we may choose to be decreasing. We now define $T_k = \inf\{t : \sum_{i\geq t} a^2_i \leq k^{-1}\}$. 

\begin{claim} $T_k=o(k)$.
\end{claim}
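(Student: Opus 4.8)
The plan is to establish the claim $T_k = o(k)$ by combining the two properties already extracted for the sequence $(a_i)$: summability of $(a_i)$ and the bound $a_i \leq b_i/i$ with $b_i \in o(1)$ decreasing. The quantity $T_k$ is the smallest index beyond which the tail sum of squares drops below $k^{-1}$, so bounding $T_k$ amounts to showing that the tail sum $S(t) = \sum_{i \geq t} a_i^2$ decays faster than $1/t$; more precisely, $S(t) = o(1/t)$ would give $T_k = o(k)$ directly, since then for any $\eps > 0$ we would have $S(\eps k) \leq k^{-1}$ for $k$ large, hence $T_k \leq \eps k$.

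So the core estimate I would prove is $t \cdot S(t) \to 0$ as $t \to \infty$. First I would write $S(t) = \sum_{i \geq t} a_i^2$ and use $a_i \leq b_i / i \leq b_t / i$ for $i \geq t$ (using that $(b_i)$ is decreasing) to get $a_i^2 \leq (b_t/i) a_i$, whence $S(t) \leq b_t \sum_{i \geq t} a_i / i \leq \frac{b_t}{t} \sum_{i \geq t} a_i$. Since $(a_i)$ is summable, $\sum_{i \geq t} a_i$ is bounded (indeed it tends to $0$), so $t \cdot S(t) \leq b_t \sum_{i \geq t} a_i \to 0$ because $b_t \to 0$ and the tail sum is bounded. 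This gives $S(t) = o(1/t)$, and then $T_k = o(k)$ as explained above: given $\eps > 0$, pick $N$ so that $S(t) \leq \eps^{-1} \cdot t^{-1}$ for... — more carefully, $S(t) = o(1/t)$ means $t S(t) \to 0$, so for large $k$ we have $(\eps k) S(\eps k) < 1$, i.e.\ $S(\eps k) < (\eps k)^{-1} < k^{-1}$, forcing $T_k \leq \eps k$.

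I do not anticipate a serious obstacle here — this is the routine analytic core, and the only mild subtlety is keeping track of which auxiliary sequence ($(a_i)$, $(b_i)$, or the tail sums) is being used for monotonicity at each step, together with the fact that $T_k$ might a priori be infinite if $S(t)$ never drops below $k^{-1}$; but summability of $(a_i^2)$ (a consequence of summability of $(a_i)$ together with boundedness, since $a_i^2 \leq a_1 a_i$) guarantees $S(t) \to 0$, so $T_k < \infty$ for every $k$. Once the claim $T_k = o(k)$ is in hand, the proof of Lemma~\ref{lem:analysis} will finish by setting $n_k$ to be (a strictly increasing modification of) $T_k$: then $n_k = o(k)$ by the claim and $S(n_k) = S(T_k) \leq k^{-1} = o(1)$ by the defining property of $T_k$, and passing from $o(k), o(1)$ back to the stated $o(\sqrt{k}), o(1/\sqrt{k})$ is precisely the harmless reparametrisation noted before the proof (replace $k$ by $k^2$, or observe $T_{\lceil\sqrt{k}\rceil}$ works). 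Ensuring $(n_k)$ is \emph{strictly} increasing is a trivial fix: replace $n_k$ by $\max(T_k, n_{k-1}+1)$, which preserves both asymptotic bounds.
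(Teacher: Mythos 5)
Your proof is correct in substance and follows the same underlying strategy as the paper: both arguments run through the bound $a_i\leq b_i/i$ with $b_i$ decreasing and $o(1)$, and both deduce the claim from a quantitative decay of the tail sum $S(t)=\sum_{i\geq t}a_i^2$. The execution differs slightly. The paper estimates $S(t)\leq b_t^2\sum_{i\geq t}i^{-2}\lesssim b_t^2/t$, then introduces the auxiliary index $S_k=\inf\{s:b_s^2/s\leq k^{-1}\}$ and shows $S_k=o(k)$ via $(S_k-1)/k<b_{S_k-1}^2\to 0$. You instead bound $a_i^2\leq(b_t/i)a_i\leq(b_t/t)a_i$ and sum to get $tS(t)\leq b_t\sum_{i\geq t}a_i\to 0$, i.e. $S(t)=o(1/t)$, and then read off $T_k=o(k)$ directly. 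Your route is a touch more streamlined since it avoids the auxiliary infimum, at the cost of invoking summability of $(a_i)$ in the tail bound where the paper only uses $\sum i^{-2}<\infty$; both are of course available.

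One small arithmetic slip to fix in the last step: you write ``$S(\varepsilon k)<(\varepsilon k)^{-1}<k^{-1}$,'' but for $\varepsilon<1$ (the only interesting regime for $o(k)$) one has $(\varepsilon k)^{-1}>k^{-1}$, so the displayed chain does not close. The repair is to unwind the $o$ with the constant $\varepsilon$ rather than $1$: from $tS(t)\to 0$, for large $k$ one has $(\varepsilon k)\,S(\varepsilon k)<\varepsilon$, hence $S(\varepsilon k)<\varepsilon\cdot(\varepsilon k)^{-1}=k^{-1}$, and so $T_k\leq\lceil\varepsilon k\rceil$. With that one-line correction the argument is complete, and your handling of the a priori finiteness of $T_k$ and the final reparametrisation from $o(k)$ to $o(\sqrt{k})$ matches the paper's remarks.
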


Computing tails we find, using the monotonicity of $(b_i)$,
\begin{equation} \tag{$\spadesuit$}
\sum_{i\geq t} a^2_i \leq \sum_{i\geq t} b^2_i/i^2 \leq b^2_t \sum_{i\geq t} i^{-2} = b^2_t/t,
\label{eq:tail_sum}
\end{equation}
Define $S_k = \inf\{ s : b_s^2/s \leq k^{-1} \}$. We see that $T_k \leq S_k$, so it suffices that $S_k=o(k)$. But by definition, $(S_k - 1) / k < b^2_{S_k - 1} \in o(1)$ and so we have shown the claim.

To finish the proof, we know from \eqref{eq:tail_sum} that $S(t)=o(t^{-1})$, i.e. $S(t) \leq f_t/t$ for some non-increasing $f_t \in o(1)$. Consequently, we are looking for a sequence $n_k=o(k)$ such that $f_{n_k} / n_k=o\left(k^{-1}\right)$, or equivalently for a sequence $m_k=o(1)$ for which $f_{km_k} / m_k = o(1)$. Here is a construction of such a sequence $m_k$:
\begin{itemize}
\item Define $t_j = \inf\{ t : f_t \leq 2^{-j}\}$.
\item Set $m_k = \sum_{j = 0}^{\infty}1_{\left\{jt_j \leq m_k < (j+1)t_{j+1}\right\}} j^{-1}$.
\end{itemize}
These $m_k$ are certainly $o(1)$, and with $j(k) = \sup\{ j : j t_j \leq k\}$ we have
$$f_{km_k} / m_k = f_{k\cdot j(k)^{-1}} \cdot j(k) \leq f_{t_{j(k)}} \cdot j(k) = j(k)\cdot 2^{-j(k)}=o(1)$$
as desired.
\end{proof}

In this context Lemma \ref{lem:analysis} implies that there is a function $n(k)$ that that depends only on $(\mY,\mA)$ and is $o(\sqrt{k})$ such that $\sum_{i\geq n(k)}a_i^2=o(1/\sqrt{k})$. It follows that $|A_{n_k}^2-\mA|=o(1/\sqrt{k})$. Since $\calg_k(\mY,\mA)\leq\calg_k(Y_n,A_n)$ for all $n$ and $k$ we get
{\small
\begin{align*}
&\ealg_k(\mY,\mA)\leq-\frac{\kappa_{n(k)}A_{n(k)}^2}{2}-K_{Y_{n(k)}}^+\cdot A_{n(k)}+\sqrt{2A_{n(k)}^2k+5A_{n(k)}^2n(k)+\frac{(\kappa_{n(k)}A_{n(k)})^2}{4}}-\sqrt{2\mA^2k} \\
&=-\frac{\kappa_{n(k)}A_{n(k)}^2}{2}-K_{Y_{n(k)}}^+\cdot A_{n(k)}+\sqrt{2\left(\mA^2+o\left(\frac{1}{\sqrt{k}}\right)\right)k+5\left(\mA^2+o\left(\frac{1}{\sqrt{k}}\right)\right)(n(k)+1)+O(1)}-\sqrt{2\mA^2k} \\
&=-\frac{\kappa_{n(k)}A_{n(k)}^2}{2}-K_{Y_{n(k)}}^+\cdot A_{n(k)}+\sqrt{2\mA^2k+o(\sqrt{k})}-\sqrt{2\mA^2k}
\end{align*}}

By letting $k\to\infty$ and substituting $\kappa_nA_n^2=-A_n\cdot K_{Y_n}$ we achieve the following.

\begin{prop} Let $(\mY,\mA)$ be a pseudo-polarised Looijenga tower. Then,
$$\limsup_{k\to\infty}\ealg_k(\mY,\mA)\leq\frac{1}{2}K_\mY\cdot\mA-K_\mY^+\cdot\mA$$
\end{prop}

We convert this into combinatorial language.

\begin{cor} Let $\Omega$ be a convex domain. Then,
$$\limsup_{k\to\infty}e_k(X_\Omega)\leq-\frac{1}{2}\left(a(\Omega)+b(\Omega)-\frac{1}{2}\laff(\partial^+\Omega)\right)$$
In particular, if $\partial^+\Omega$ has no rational-sloped edge then
$$\limsup_{k\to\infty}e_k(X_\Omega)\leq-\frac{1}{2}(a(\Omega)+b(\Omega))$$
\end{cor}

\subsection{Lower bound for error terms}

To deduce a lower bound we can in fact generalise to the setting of a tower of blowups $\mY=\{(Y_n,A_n)\}_{n\in\Z_{\geq0}}$ of polarised surfaces where $-K_\mY$ is `effective' -- that is, each $-K_{Y_n}$ is effective. Denote
$$\on{NS}(Y)_{A\geq0}:=\{D\in\on{NS}(Y):D\cdot A\geq0\}$$
Define for a pseudo-polarised surface $(Y,A)$
$$c_k^+(Y,A):=\inf_{\on{NS}(Y)_{A\geq0}}\{D\cdot A:D\cdot(D-K_Y)\geq 2k\}$$
This is a variation on the asymptotic capacity $\casy_k(Y,A)$ from \cite[\S4.1]{wor_alg_20} or the estimate using the `approximate ECH index' of \cite[\S5.2]{hut_ech_19}. These invariants will have preferable numerics to study lower bounds for $\calg_k(Y,A)$. It is already clear that
$$c_k^+(Y,A)\leq\calg_k(Y,A)$$
for all $k$. As usual we write $\kappa=-K_Y\cdot A/A^2$.

\begin{lemma} Suppose $(Y,A)$ is a pseudo-polarised surface such that $Y$ is smooth or toric. If $Y$ is not toric, assume that $-K_Y$ is effective. When $k>\frac{1}{8}\left(\frac{(K_Y\cdot A)^2}{A^2}-K_Y^2\right)$ we have
$$c_k^+(Y,A)=\frac{1}{2}K_Y\cdot A+\sqrt{K_Y^2A^2+2A^2k}$$
\end{lemma}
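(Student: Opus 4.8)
The plan is to solve the optimisation problem defining $c_k^+(Y,A)$ explicitly, exploiting that the constraint set $\on{NS}(Y)_{A\geq0}$ is a half-space rather than the more rigid nef cone. First I would observe that since we minimise the linear functional $D\cdot A$ over the region cut out by the quadratic inequality $D\cdot(D-K_Y)\geq 2k$ intersected with the half-space $D\cdot A\geq 0$, and since $A^2>0$ (so the quadratic form restricted to the relevant subspace is controlled by the Hodge index theorem), the infimum — when the half-space constraint is inactive — is attained at a point where the linear functional is tangent to the quadric $\{D\cdot(D-K_Y)=2k\}$. Writing $D$ in terms of its $A$-component and the orthogonal complement, Hodge index gives that the orthogonal complement of $A$ in $\on{NS}(Y)_\R$ is negative definite, so on the affine slice $\{D\cdot A = t\}$ the function $D\cdot(D-K_Y)$ is a downward-opening paraboloid in the complementary directions and is maximised at the unique point proportional to $A$ (after completing the square to absorb the linear term $-D\cdot K_Y$). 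Substituting $D = \lambda A + (\text{correction along } K_Y)$ and optimising reduces the whole problem to a one-variable quadratic in $t = D\cdot A$.

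Second I would carry out that one-variable computation: after eliminating the orthogonal directions, the constraint becomes, for $D$ with $D\cdot A = t$, an inequality of the shape $t^2/A^2 + (\text{stuff involving } K_Y\cdot A, K_Y^2) \geq 2k$, and minimising $t$ subject to $t\geq 0$ and this inequality yields the larger root of the corresponding quadratic equation. A direct calculation — completing the square using $K_Y^2$, $K_Y\cdot A$, and $A^2$, and using the substitution $\kappa = -K_Y\cdot A/A^2$ only as bookkeeping — should produce exactly
$$c_k^+(Y,A) = \tfrac{1}{2}K_Y\cdot A + \sqrt{K_Y^2 A^2 + 2A^2 k}.$$
The hypothesis $k > \tfrac18\big((K_Y\cdot A)^2/A^2 - K_Y^2\big)$ is precisely what forces the expression under the square root to exceed $\tfrac14 (K_Y\cdot A)^2/A^2$, equivalently forces the optimal value $t$ to be strictly positive, so that the half-space constraint $D\cdot A\geq 0$ is indeed inactive and drops out; I would check this inequality chain carefully as it is the reason for the stated range of $k$.

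Third, there is a genuine subtlety I expect to be the main obstacle: the minimisation is over $\on{NS}(Y)$, i.e. over an \emph{integral} (or at least discrete) lattice of divisor classes, whereas the tangency computation above takes place over $\R$. So strictly the real-optimum value is only a lower bound, and one must argue the infimum over lattice points equals the real infimum. In the smooth case I would appeal to the surjectivity of the intersection pairing and the fact that one can perturb any real near-optimal class to a nearby integral class without changing $D\cdot A$ or violating the quadratic constraint in the limit — more precisely, the infimum of a continuous function over a dense-enough subset of the feasible region equals its infimum over the whole region, and here the feasible integral classes accumulate on the real-optimal face because $A$ has positive square and the quadric is non-degenerate. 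In the toric case one instead invokes the reduction already used repeatedly in the paper (via \cite[Prop.~2.11]{wor_alg_20} and the equivalence Lem.~\ref{lem:equiv}) to pass between effective and nef descriptions and to the smooth situation. I would also need to confirm that the minimum is actually \emph{attained} (the statement is an equality, not merely a bound), which follows once the lattice-density argument shows the real optimum lies in the closure of the feasible lattice classes and the objective is proper on that set. Finally I would record that the same formula, and the displayed value, will feed into the next step where $c_k^+(Y_n,A_n) \leq \calg_k(Y_n,A_n)$ is passed to the limit over the tower to extract $\liminf_{k\to\infty}\ealg_k(\mY,\mA)\geq \tfrac12 K_\mY\cdot\mA$.
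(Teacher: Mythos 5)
Your first two paragraphs recover the paper's argument: the paper applies the Hodge index theorem to produce an orthogonal basis $A,e_1,\dots,e_s$ of $\operatorname{NS}(Y)_\R$ with $e_i^2<0$, writes $-K_Y=\kappa A+\sum\delta_ie_i$, observes (by completing the square in the negative-definite directions, exactly as you describe) that the optimiser has the form $D_k=a_kA-\sum\tfrac{\delta_i}{2}e_i$, and reduces to a one-variable quadratic in $a_k$; the stated lower bound on $k$ is precisely the condition that this quadratic has a unique nonnegative root, which is what you identify as the half-space constraint being inactive. So the core of your proof agrees with the paper's.

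The third paragraph, however, worries about a problem that isn't there. You read $\operatorname{NS}(Y)$ in the definition of $c_k^+$ as the integral N\'eron--Severi lattice, but in this part of the paper it denotes the real N\'eron--Severi space: the proof expands in an orthogonal basis containing the $\R$-divisor $A$ and produces a real optimiser $D_k$, and the closed-form value involving $\sqrt{K_Y^2A^2+2A^2k}$ could never be attained on a lattice. Indeed the whole point of $c_k^+$ --- explicitly flagged in the paper as a variant of the asymptotic capacity $\casy_k$ of \cite[\S4.1]{wor_alg_20} and of Hutchings' approximate ECH index --- is that it replaces the integral nef cone by the real half-space $\{D\cdot A\geq 0\}$, precisely so that the optimisation becomes an exactly solvable quadratic programme and furnishes a lower bound $c_k^+(Y,A)\leq\calg_k(Y,A)$. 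No lattice-density or approximation argument is needed; if it were, the lemma's formula would be false as stated. You should simply drop that paragraph and instead note, as the paper does, that the toric case reduces to the smooth one by the standard desingularisation argument, and that the role of the effectivity of $-K_Y$ (automatic in the toric case) is to guarantee $\kappa\geq 0$ so that the two roots of the quadratic straddle zero once $k$ exceeds the stated threshold.
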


\begin{proof} Without loss of generality, we assume that $Y$ is smooth. From the Hodge index theorem we have an orthogonal basis $A,e_1,\dots,e_s$ of $\on{NS}(Y)$. Set $e_i^2=-r_i$. Let $-K_Y=\kappa A+\sum\delta_ie_i$. We see that an optimiser for $c_k^+(Y,A)$ is
$$D_k=a_kA-\sum\frac{\delta_i}{2}e_i$$
where $a_k$ is the smallest nonnegative real number $a$ such that
$$a(a+\kappa)\geq\frac{1}{A^2}\left(2k-\sum\frac{\delta_i^2r_i}{4}\right)$$
Solving for $a$, we see that the two solutions are
$$-\frac{\kappa}{2}\pm\sqrt{\frac{\kappa^2}{4}-\sum\frac{\delta_i^2r_i}{4A^2}+\frac{2k}{A^2}}$$
We also note that
$$K_Y^2=\kappa^2A^2-\sum\delta_i^2r_i$$
and so the solutions for $a$ can be rewritten as
$$-\frac{\kappa}{2}\pm\sqrt{\frac{K_Y^2}{4A^2}+\frac{2k}{A^2}}$$
There is a unique nonnegative solution given by the larger value of $a$ precisely when
$$\frac{\kappa^2}{4}<\frac{K_Y^2}{4A^2}+\frac{2k}{A^2}$$
or when
$$k>\frac{\kappa^2A^2}{8}-\frac{K_Y^2}{8}=\frac{1}{8}\left(\frac{(K_Y\cdot A)^2}{A^2}-K_Y^2\right)$$
Substituting in the larger value for $a_k$ gives the result.
\end{proof}

Note that $K_{Y_n}^2=K_{Y_0}^2-n$. Hence, we see that
$$\calg_k(Y_n,A_n)\geq c_k^+(Y_n,A_n)=\frac{1}{2}K_{Y_n}\cdot A_n+\sqrt{K_{Y_n}^2A_n^2+2A_n^2k}$$
for all $k>\frac{1}{8}\left(\frac{(K_{Y_n}\cdot A_n)^2}{A_n^2}-K_{Y_n}^2\right)=\frac{1}{8}n+\frac{1}{8}\left(\frac{(K_{Y_n}\cdot A_n)^2}{A_n^2}-K_{Y_0}^2\right)$. For notational convenience we note that
$$\frac{(K_{Y_n}\cdot A_n)^2}{A_n^2}-K_{Y_0}^2\leq\frac{(K_{Y_0}\cdot A_0)^2}{\mA^2}-K_{Y_0}^2=\frac{9c^2}{c^2-\sum a_i^2}-K_{Y_0}^2=:N$$
We choose a sequence $n_k$ as in Lemma \ref{lem:analysis} with $n_k=o(\sqrt{k})$ and $A_{n_k}^2-\mA^2=o(1/\sqrt{k})$. For sufficiently large $k$ we have $\frac{1}{8}n_k+N<k$. Then, for all such $k$ we have
\begin{align*}
\calg_k(Y_{n_k},A_{n_k})&\geq\frac{1}{2}K_{Y_{n_k}}\cdot A_{n_k}+\sqrt{K_{Y_{n_k}}^2A_{n_k}^2+2A_{n_k}^2k} \\
&=\frac{1}{2}K_{Y_{n_k}}\cdot A_{n_k}+\sqrt{(K_{Y_0}^2-n_k)\left(\mA^2+o\left(\frac{1}{\sqrt{k}}\right)\right)+2\left(\mA^2+o\left(\frac{1}{\sqrt{k}}\right)\right)k} \\
&=\frac{1}{2}K_{Y_{n_k}}\cdot A_{n_k}+\sqrt{(K_{Y_0}^2+o(\sqrt{k}))\left(\mA^2+o\left(\frac{1}{\sqrt{k}}\right)\right)+2\left(\mA^2+o\left(\frac{1}{\sqrt{k}}\right)\right)k} \\
&=\frac{1}{2}K_{Y_{n_k}}\cdot A_{n_k}+\sqrt{2\mA^2k+o(\sqrt{k})}
\end{align*}
As a result, letting $k\to\infty$ gives
\begin{align*}
\liminf_{k\to\infty}\ealg_k(\mY,\mA)&\geq\lim_{k\to\infty}\frac{1}{2}K_{Y_{n_k}}\cdot A_{n_k}+\sqrt{2\mA^2k+o(\sqrt{k})}-\sqrt{2\mA^2k}=\frac{1}{2}K_\mY\cdot\mA
\end{align*}

\begin{prop} Let $(\mY,\mA)$ be a pseudo-polarised Looijenga tower with $\mY$ either smooth or toric. Then
$$\liminf_{k\to\infty}\ealg_k(\mY,\mA)\geq\frac{1}{2}K_\mY\cdot\mA$$
\end{prop}

This implies the following in combinatorial terms.

\begin{cor} Suppose $X_\Omega$ is a convex toric domain. Then
$$\liminf_{k\to\infty}e_k(X_\Omega)\geq-\frac{1}{2}\left(a(\Omega)+b(\Omega)+\laff(\partial^+\Omega)\right)$$
In particular, if $\Omega$ has no rational-sloped edge then
$$\liminf_{k\to\infty}e_k(X_\Omega)\geq-\frac{1}{2}\left(a(\Omega)+b(\Omega)\right)$$
\end{cor}

This completes the proof of Thm.~\ref{thm:loo_main}.

\subsection{Concave toric domains}

We deduce the analogue of Cor.~\ref{cor:main} for concave domains by using a formal property of toric ECH. The formal property in question is described by the following.

\begin{prop}[{\cite[Thm.~A.1]{cri_sym_19}}] Suppose $\Omega$ is a convex toric domain with weight sequence given by $\on{wt}(\Omega)=(c;\on{wt}(\Omega_2),\on{wt}(\Omega_3))$ where $\Omega_2,\Omega_3$ are concave domains as in Def.~\ref{def:conv_wt}. Then
$$\cech_k(X_\Omega)=\inf_{k_2,k_3\geq0}\{\cech_{k+k_2+k_3}(B(c))-\cech_{k_2}(X_{\Omega_2})-c_{k_3}(X_{\Omega_3})\}$$
\end{prop}

Let $\Delta$ be a concave toric domain. It is clear that there exists a convex domain $\Omega$ such that either $\Omega_2=\Delta$ and $\Omega_3=\emptyset$, or $\Omega_2=\emptyset$ and $\Omega_3=\Delta$. We assume the former without loss of generality.

\begin{prop} \label{thm:main_conc} Let $X_\Delta$ be a concave toric domain. Then
$$\liminf_{k\to\infty}e_k(X_\Delta)\geq-\frac{1}{2}(a(\Delta)+b(\Delta)+\laff(\partial^+\Delta))$$
\end{prop}

\begin{proof} Let $\Omega$ be as discussed above and let $c$ be the head of $\on{wt}(\Omega)$. Then
$$\cech_k(X_\Omega)=\inf_{k_2\geq0}\{\cech_k(B(c))-\cech_{k_2}(X_\Delta)\}$$
This infimum is attained for each $k$; we denote an optimiser for $k$ by $k'$ so that
$$\cech_k(X_\Omega)=\cech_k(B(c))-\cech_{k'}(X_\Delta)$$
Thus $e_{k'}(X_\Delta)$ is given by
\begin{align*}
&\cech_{k+k'}(B(c))-\cech_k(X_\Omega)-\sqrt{4\on{vol}(X_\Delta)k'} \\
&=e_{k+k'}(B(c))-e_k(X_\Omega)+\sqrt{4(\on{vol}(X_\Omega)+\on{vol}(X_\Delta))(k+k')}-\sqrt{4\on{vol}(X_\Omega)k}-\sqrt{4\on{vol}(X_\Delta)k'}
\end{align*}
From Cor.~\ref{cor:main} we see that $e_k(X_\Omega)$ and $e_k(B(c))$ are bounded and so it follows that $e_{k'}(X_\Delta)$ is bounded below by
$$-\frac{3c}{2}+\frac{1}{2}(a(\Omega)+b(\Omega)+\laff(\partial^+\Omega))=-\frac{1}{2}(a(\Delta)+b(\Delta)+\laff(\partial^+\Delta))$$
using the Cauchy--Schwartz inequality.
\end{proof}

Hutchings shows in \cite[Cor.~3.9]{hut_ech_19} that $e_k(X_\Delta)$ is bounded above by $-\frac{1}{2}\sum_{a\in\on{wt}(\Delta)}a$. We hence obtain the following.

\begin{thm} \label{cor:main_conc} Let $X_\Delta$ be a concave toric domain. Then
\begin{align*}
-\frac{1}{2}(a(\Delta)+b(\Delta)-\laff(\partial^+\Delta))&\geq\limsup_{k\to\infty}e_k(X_\Delta) \\
&\geq\liminf_{k\to\infty}e_k(X_\Delta)\geq-\frac{1}{2}(a(\Delta)+b(\Delta)+\laff(\partial^+\Delta))
\end{align*}
and so $e_k(X_\Delta)=O(1)$. If $\partial^+\Delta$ has no rational-sloped edges then
$$\lim_{k\to\infty}e_k(X_\Delta)=-\frac{1}{2}(a(\Delta)+b(\Delta))$$
\end{thm}

\begin{proof} The bounds follow immediately from Thm.~\ref{thm:main_conc} and \cite[Cor.~3.9]{hut_ech_19} in combination with\cite[Lem.~3.6]{hut_ech_19}. From here convergence is clear when $\partial^+\Delta$ has no rational-sloped edges.
\end{proof}

\subsection{Algebraic analogues of rational-sloped edges}

We discuss the geometric analogue for polarised Looijenga towers of the combinatorial condition on convex domains of having a rational-sloped edge. In particular, this supplies a criterion for convergence for $\ealg_k(\mY,\mA)$ in this generality. Given a poset $\mP$ define its \emph{extended poset} $\wh{\mP}$ to be $\mP\cup\{\infty\}$ with $\infty>p$ for all $p\in\mP$. If $(\mY,\mA)$ is a pseudo-polarised Looijenga tower we can define an weight function on the extended poset $\wh{\mP}_{(Y_0,L_0)}$ by setting $\on{wt}(\infty)=-K_{Y_0}^+\cdot A_0$. Define a subposet $\wh{\mP}_{(Y_0,L_0)}(p)$ as follows:
\begin{itemize}
\item $p$ is the unique maximal element of $\mP_{(Y_0,L_0)}(p)$,
\item if $q\in\wh{\mP}_{(Y_0,L_0)}(p)$ then exactly one direct descendant of $q$ is in $\wh{\mP}_{(Y_0,L_0)}(p)$, namely the direct descendant corresponding to the point of intersection of $E_q$ and the strict transform of $E_p$ in $Y_q$. 
\end{itemize}

This all works similarly for the weighted poset $\wh{\mP}_\Omega$ associated to a convex domain $\Omega$; for instance, the weight of the element $\infty$ is the affine length of the possibly empty edge of slope $(1,-1)$ in $\partial^+\Omega$, and one can interpret each element $q$ of $\wh{\mP}_\Omega(p)$ with direct ancestor $q'$ as the vertex of $\Omega_{q'}$ incident to the edge that is the moment image of (the strict transform of) $E_p$.

\begin{lemma} Let $X_\Omega$ be a convex toric domain. Let $\wh{\mP}_\Omega$ be the extended weighted poset associated to $\Omega$. Then there is a bijection
$$\text{rational-sloped edges in $\partial^+\Omega$}\longleftrightarrow\text{$p\in\wh{\mP}_\Omega$ such that $\on{wt}(p)-\sum_{q\in\mP_\Omega(p)}\on{wt}(q)>0$}$$
\end{lemma}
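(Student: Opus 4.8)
The plan is to unwind both sides of the claimed bijection in terms of the weight-sequence recursion of Def.~\ref{def:conv_wt} and then recognize them as the same thing. First I would fix a rational-sloped edge $\ell\subseteq\partial^+\Omega$. In the weight sequence recursion, $\ell$ persists as an edge of the shrinking polytope $\Omega_{q'}$ at each stage until, at some finite step, the largest regular triangle we remove has a side \emph{parallel} to (a translate of) $\ell$ — equivalently, $\ell$ becomes a boundary edge of a $\Delta_a$ that gets split off. (If $\ell$ is the edge of slope $(1,-1)$ this "step'' is the formal element $\infty$; otherwise it is an honest node $p\in\mP_\Omega$.) This identifies a canonical $p=p(\ell)\in\wh{\mP}_\Omega$: the element at which $\ell$ stops being subdivided. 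I would make this precise by induction on the weight-sequence recursion, using the geometric interpretation already recorded in the excerpt: an element $q\in\wh{\mP}_\Omega(p)$ with direct ancestor $q'$ is the vertex of $\Omega_{q'}$ incident to the edge that is the moment image of the strict transform of $E_p$, and one of the two direct descendants of $q$ lies in $\wh{\mP}_\Omega(p)$ while the other does not (it "peels off'' a triangle on the $\ell$-side).

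Next I would compute $\on{wt}(p)-\sum_{q\in\mP_\Omega(p)}\on{wt}(q)$ directly. The point is that $\on{wt}(p)$ is the affine length of the edge of $\Omega_{q'}$ that is the moment image of the strict transform of $E_p$ at the stage $p$ appears (this is where I need the $-K_{Y_0}^+\cdot A_0$ convention for $\on{wt}(\infty)$, so the statement reads uniformly), and the sub-poset $\mP_\Omega(p)$ precisely records the successive triangles cut off along that edge in all subsequent stages of the recursion. So the sum $\sum_{q\in\mP_\Omega(p)}\on{wt}(q)$ is the total affine length eaten away from that edge. The difference is therefore the affine length of the "residual'' edge — which is positive exactly when that edge does not get entirely consumed, i.e.\ exactly when $\ell$ survives as a genuine rational-sloped edge of $\partial^+\Omega$ rather than being an artifact of the recursion. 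Conversely, if $p\in\wh{\mP}_\Omega$ is \emph{not} of the form $p(\ell)$ for a rational-sloped edge $\ell$, the edge it tracks is subdivided down to affine length $0$ in the limit, so the telescoping difference is $0$; and if two rational-sloped edges are distinct they yield distinct elements, since the recursion keeps their incident vertices separate. This gives the bijection in both directions.

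I would organize the writing as: (1) set up the map $\ell\mapsto p(\ell)$ via the recursion and the edge-tracking description of $\wh{\mP}_\Omega(p)$ from the paragraph preceding the lemma; (2) prove the telescoping identity $\on{wt}(p)-\sum_{q\in\mP_\Omega(p)}\on{wt}(q)=\laff(\text{residual edge at }p)$ by an induction that at each step removes one triangle $\Delta_{a}$ and replaces $\on{wt}(q)$ by $\on{wt}(q_1)+\on{wt}(q_2)$ for the surviving descendant $q_1\in\wh{\mP}_\Omega(p)$ and with $\on{wt}(q_2)$ being exactly the next term subtracted; (3) conclude positivity $\Leftrightarrow$ the residual edge is a genuine edge of $\partial^+\Omega$, and injectivity/surjectivity.

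The main obstacle is step (2): making the telescoping rigorous requires carefully matching the combinatorics of the poset $\mP_\Omega(p)$ with the affine geometry of which triangle-side is being removed at each stage of the Def.~\ref{def:conv_wt}/Def.~\ref{def:conc_wt} recursion, and handling the bookkeeping uniformly for the slope-$(1,-1)$ edge (encoded by $\infty$) together with all other rational-sloped edges. Everything else is essentially unpacking definitions, but one must be attentive that the "largest regular triangle'' at each stage touches the tracked edge along a full lattice-length sub-segment, which is what makes the affine lengths add up exactly rather than merely sub-additively.
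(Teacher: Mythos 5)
Your proposal is essentially the same argument the paper gives, only spelled out: the paper's proof consists of the single assertion that $\on{wt}(p)-\sum_{q\in\mP_\Omega(p)}\on{wt}(q)$ equals the affine length of the (possibly empty) edge of $\partial^+\Omega$ introduced at the recursion step corresponding to $p$, and that rational-sloped edges are exactly those of nonzero affine length. Your steps (2) and (3) are precisely this claim, and your step (1) is the inverse direction of the same correspondence (starting from $\ell$ and locating $p(\ell)$ rather than starting from $p$ and reading off the residual edge), so the two are the same proof organized in opposite directions.
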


\begin{proof} It follows from the weight sequence recursion and the construction of $\mP_\Omega(p)$ that $\on{wt}(p)-\sum_{q\in\mP_\Omega(p)}\on{wt}(q)$ is the affine length of the (possibly empty) edge in $\partial^+\Omega$ introduced at the step corresponding to $p$ in the recursion. Rational-sloped edges in $\partial^+\Omega$ are exactly such edges that have nonzero affine length, which gives the result.
\end{proof}

We see that the extension of $\mP_\Omega$ was necessary to capture the (possiby empty) edge of slope $(1,-1)$ from the first step of the recursion.

\begin{definition} We say that a pseudo-polarised Looijenga tower $(\mY,\mA)$ is \emph{balanced} if $\on{wt}(p)-\sum_{q\in\mP_{(Y_0,L_0)}(p)}\on{wt}(q)=0$ for all $p\in\wh{\mP}_{(Y_0,L_0)}$.
\end{definition}

This is the algebraic analogue for pseudo-polarised Looijenga towers of having no rational-sloped edges in the case of convex domains.

\begin{prop} \label{prop:bal_conv} Suppose $(\mY,\mA)$ is a pseudo-polarised Looijenga tower that is balanced. Then $\ealg_k(\mY,\mA)$ is convergent with
$$\lim_{k\to\infty}\ealg_k(\mY,\mA)=\frac{1}{2}K_\mY\cdot\mA=\frac{1}{2}(K_{Y_0}-K_{Y_0}^+)\cdot A_0$$
\end{prop}

\begin{proof} We already have
$$\frac{1}{2}K_\mY\cdot\mA-K_\mY^+\cdot\mA\geq\limsup_{k\to\infty}\ealg_k(\mY,\mA)\geq\liminf_{k\to\infty}\ealg_k(\mY,\mA)\geq\frac{1}{2}K_\mY\cdot\mA$$
from Thm.~\ref{thm:loo_main}, and so it suffices to show that $-K_\mY^+\cdot\mA=0$ when $(\mY,\mA)$ is balanced. We have
\begin{align*}
-K_\mY^+\cdot\mA&=-K_\mY\cdot\mA+K_{Y_0}\cdot\mA-K_{Y_0}^+\cdot\mA \\
&=-K_{Y_0}\cdot A_0-\sum_{p\in\wh{\mP}_{(Y_0,L_0)}}\on{wt}(p)+K_{Y_0}\cdot A_0-K_{Y_0}^+\cdot A_0 \\
&=\on{wt}(\infty)-\sum_{p\in\wh{\mP}_{(Y_0,L_0)}}\on{wt}(p)
\end{align*}
Let $\mathcal{S}(0)=\{\infty\}$. Recursively define $\mathcal{S}(n)$ to be the set of maxima of
$$\wh{\mP}_{(Y_0,L_0)}\setminus\bigcup_{m< n}\bigcup_{p\in\mathcal{S}(m)}\wh{\mP}_{(Y_0,L_0)}(p)$$
By construction we have from the above that
$$-K_\mY^+\cdot\mA=\sum_{n=0}^\infty\sum_{p\in\mathcal{S}(n)}\left(\on{wt}(p)-\sum_{q\in\wh{\mP}_{(Y_0,L_0)}(p)}\on{wt}(q)\right)$$
which is zero by the assumption that $(\mY,\mA)$ is balanced. The second equality in the statement follows from $-K_\mY^+\cdot\mA=0$.
\end{proof}

\subsection{Outlook} \label{sec:outlook}

We conclude with a selection of ideas and observations that we hope will lead to stronger criteria for convergence or, if one is even expressible, a complete description of what `generic' means in Hutchings' conjecture \cite[Conj.~1.5]{hut_ech_19}.

Given convex or concave $\Omega$ we let $V(\Omega)$ be the $\Q$-vector subspace of $\R$ spanned by the affine lengths of rational-sloped edges in $\partial^+\Omega$. We denote the dimension of $V(\Omega)$ by $v(\Omega)$.

Let $X_\Omega$ be a convex or concave toric domain. We believe that two ingredients for stronger convergence criteria are this $v(\Omega)$ and the number $N(\Omega)$ of rational-sloped edges in $\partial^+\Omega$.

If $N(\Omega)<\infty$ then we suspect $e_k(X_\Omega)$ converges if $v(\Omega)\not=1$. If $\Omega$ has infinitely many rational-sloped edges then it seems likely that $e_k(X_\Omega)$ converges. In each case of convergence we expect that the limit is
\begin{equation} \tag{$\ast$} \label{eqn:ruelle}
-\frac{1}{2}\on{Ru}(X_\Omega)=-\frac{1}{2}(a(\Omega)+b(\Omega))
\end{equation}
though it is possible that there are toric domains for which $e_k$ converge but that are not generic in the sense that they do not satisfy Hutchings' conjecture and have limit different to (\ref{eqn:ruelle}). In the case of non-convergence, we expect that (\ref{eqn:ruelle}) is the midpoint of the lim inf and lim sup of $e_k(X_\Omega)$. Note that the case $v(\Omega)=0$ corresponds to $\Omega$ having no rational-sloped edges -- which is covered by Cor.~\ref{cor:main} and Thm.~\ref{cor:main_conc} -- and $v(\Omega)>1$ corresponds to $\Omega$ having at least two rational-sloped edges whose affine lengths are independent over $\Q$.

There is a distinction between the case that $\Omega$ is a of scaled-lattice type as in \cite{wor_alg_20} -- that is, where $\Omega=q\Omega_0$ for some lattice polygon $\Omega_0$ and some $q\in\R_{>0}$ -- and the complementary case: where either $\Omega$ is polytopal and has $v(\Omega)>1$, or $\Omega$ is not polytopal. In either of the latter situations we have
$$\lim_{k\to\infty}e_{k+1}(X_\Omega)-e_k(X_\Omega)=0$$
and so the obstruction $\limsup_{k\to\infty}e_{k+1}(X_\Omega)-e_k(X_\Omega)>0$ found in the situation of \cite{wor_alg_20} will not assist us in detecting convergence. It is plausible that the case where $N(\Omega)<\infty$ and $v(\Omega)=1$ behaves similarly to to the situation of \cite{wor_alg_20} and has non-convergent $e_k$.

\bibliographystyle{acm}
\bibliography{bw}

\begin{thebibliography}{10}

\bibitem{cw_ech_20}
{\sc Chaidez, J., and Wormleighton, B.}
\newblock {ECH} embedding obstructions for rational surfaces.
\newblock {\em arXiv preprint arXiv:2008.10125\/} (2020).

\bibitem{ccfhr_sym_14}
{\sc Choi, K., Cristofaro-Gardiner, D., Frenkel, D., Hutchings, M., and Ramos,
  V. G.~B.}
\newblock Symplectic embeddings into four-dimensional concave toric domains.
\newblock {\em Journal of Topology 7}, 4 (2014), 1054--1076.

\bibitem{cls_tor_11}
{\sc Cox, D.~A., Little, J.~B., and Schenck, H.~K.}
\newblock {\em Toric varieties}.
\newblock American Mathematical Soc., 2011.

\bibitem{cri_sym_19}
{\sc Cristofaro-Gardiner, D.}
\newblock Symplectic embeddings from concave toric domains into convex ones.
\newblock {\em Journal of Differential Geometry 112}, 2 (2019).

\bibitem{chmp_inf_20}
{\sc Cristofaro-Gardiner, D., Holm, T.~S., Mandini, A., and Pires, A.~R.}
\newblock On infinite staircases in toric symplectic four-manifolds.
\newblock {\em arXiv preprint arXiv:2004.13062\/} (2020).

\bibitem{ch_one_16}
{\sc Cristofaro-Gardiner, D., and Hutchings, M.}
\newblock From one {R}eeb orbit to two.
\newblock {\em Journal of Differential Geometry 102}, 1 (2016), 25--36.

\bibitem{chr_asy_15}
{\sc Cristofaro-Gardiner, D., Hutchings, M., and Ramos, V. G.~B.}
\newblock The asymptotics of {ECH} capacities.
\newblock {\em Inventiones mathematicae 199}, 1 (2015), 187--214.

\bibitem{ck_ehr_20}
{\sc Cristofaro-Gardiner, D., and Kleinman, A.}
\newblock Ehrhart functions and symplectic embeddings of ellipsoids.
\newblock {\em Journal of the London Mathematical Society 101}, 3 (2020),
  1090--1111.

\bibitem{cls_new_15}
{\sc Cristofaro-Gardiner, D., Li, T.~X., and Stanley, R.}
\newblock New examples of period collapse.
\newblock {\em arXiv preprint arXiv:1509.01887\/} (2015).

\bibitem{cs_sub_18}
{\sc Cristofaro-Gardiner, D., and Savale, N.}
\newblock Sub-leading asymptotics of {ECH} capacities.
\newblock {\em arXiv preprint arXiv:1811.00485\/} (2018).

\bibitem{ghk_mir_15}
{\sc Gross, M., Hacking, P., and Keel, S.}
\newblock Mirror symmetry for log {Calabi-Yau} surfaces {I}.
\newblock {\em Publications math{\'e}matiques de l'IH{\'E}S 122}, 1 (2015),
  65--168.

\bibitem{ghk_mod_15}
{\sc Gross, M., Hacking, P., and Keel, S.}
\newblock Moduli of surfaces with an anti-canonical cycle.
\newblock {\em Compositio Mathematica 151}, 2 (2015), 265--291.

\bibitem{hut_qua_11}
{\sc Hutchings, M.}
\newblock Quantitative embedded contact homology.
\newblock {\em Journal of Differential Geometry 88}, 2 (2011), 231--266.

\bibitem{hut_lec_14}
{\sc Hutchings, M.}
\newblock Lecture notes on embedded contact homology.
\newblock In {\em Contact and symplectic topology}. Springer, 2014,
  pp.~389--484.

\bibitem{hut_ech_19}
{\sc Hutchings, M.}
\newblock {ECH} capacities and the {R}uelle invariant.
\newblock {\em arXiv preprint arXiv:1910.08260\/} (2019).

\bibitem{loo_rat_81}
{\sc Looijenga, E.}
\newblock Rational surfaces with an anti-canonical cycle.
\newblock {\em Annals of Mathematics 114}, 2 (1981), 267--322.

\bibitem{mcd_sym_09}
{\sc McDuff, D.}
\newblock Symplectic embeddings of 4-dimensional ellipsoids.
\newblock {\em Journal of Topology 2}, 1 (2009), 1--22.

\bibitem{mcd_hof_11}
{\sc McDuff, D.}
\newblock The {Hofer} conjecture on embedding symplectic ellipsoids.
\newblock {\em Journal of Differential Geometry 88}, 3 (2011), 519--532.

\bibitem{rue_rot_85}
{\sc Ruelle, D.}
\newblock Rotation numbers for diffeomorphisms and flows.
\newblock In {\em Annales de l'IHP Physique th{\'e}orique\/} (1985), vol.~42,
  pp.~109--115.

\bibitem{sun_est_18}
{\sc Sun, W.}
\newblock An estimate on energy of min-max {Seiberg--Witten} {Floer}
  generators.
\newblock {\em arXiv preprint arXiv:1801.02301\/} (2018).

\bibitem{wor_ech_19}
{\sc Wormleighton, B.}
\newblock {ECH} capacities, {Ehrhart} theory, and toric varieties.
\newblock {\em arXiv preprint arXiv:1906.02237\/} (2019).

\bibitem{wor_alg_20}
{\sc Wormleighton, B.}
\newblock Algebraic capacities.
\newblock {\em arXiv preprint arXiv:2006.13296\/} (2020).

\bibitem{wor_num_20}
{\sc Wormleighton, B.}
\newblock {\em Numerics and stability for orbifolds with applications to
  symplectic embeddings}.
\newblock PhD thesis, UC Berkeley, 2020.

\end{thebibliography}

\end{document}